\newtheorem{theorem}{Theorem}[section]
\newtheorem{proposition}[theorem]{Proposition}
\newtheorem{lemma}[theorem]{Lemma}
\newtheorem{conj}[theorem]{Conjecture} 
\theoremstyle{definition}
\theoremstyle{remark}
\newtheorem{remark}[theorem]{Remark}
\numberwithin{equation}{section}
\newcommand{\e}{\varepsilon}
\begin{document}

\title[Fourier Coefficients of Critical Gaussian Multiplicative Chaos]{On the Fourier Coefficients of Critical Gaussian Multiplicative Chaos}
\author{Louis-Pierre Arguin}
\address{Mathematical Institute, University of Oxford/Baruch College and The Graduate Center, City University of New York}
\email{arguin@maths.ox.ac.uk}

\author{Jad Hamdan}
\address{Mathematical Institute, University of Oxford}
\email{hamdan@maths.ox.ac.uk}

\begin{abstract}
We continue the study of the Fourier coefficients of Gaussian multiplicative chaos (GMC) recently initiated by Garban and Vargas \cite{GarbanVargas}. We show that if $\{c_n\}_{n\geq 1}$ are the Fourier coefficients of critical GMC on the unit interval, then $(\log n)^{\alpha}c_n$ converges to zero in probability as $n$ tends to infinity for any $\alpha<1/4$.
\end{abstract}

\maketitle

\section{Introduction}

The theory of Gaussian multiplicative chaos (GMC) was initiated by Kahane \cite{kahane} in the 1980s and gives meaning to measures formally written as
\begin{align}\label{eq:GMC}
        \mu_\gamma(\mathrm{d}x)\,``\hspace{-2px}=\hspace{-2px}"\,e^{\gamma X(x)-\tfrac{\gamma^2}{2}\mathbb{E}X(x)^2}\mathrm{d}x.
\end{align}
Here, $X$ and is a logarithmically correlated Gaussian field on $\mathbb{R}^d$, $\gamma>0$ is an inverse temperature parameter, and $\mathrm{d}x$ denotes Lebesgue measure on some Borel set $D\subseteq \mathbb{R}^d$. Initially motivated by the need for a rigorous model for energy dissipation and intermittency in the Kolmogorov-Obukhov-Mandelbrot theory of developed turbulence \cite{mandelbrot, kolmogorov, oboukhov}, this theory has since found applications in many other fields, including finance \cite{finance}, quantum gravity \cite{LQG}, the extreme values of log-correlated fields \cite{DingBramsonZeitouniGFF,DingRoyZeitouni,BiskupLouidor,BramsonDingZeitouni} (see also \cite{RV14} and the references therein) and related problems in random matrix theory and number theory (e.g., \cite{WebbL2, WebbL1, HarperRMF, HarperDeterministic, SaksmanWebb}).

The difficulty in defining \eqref{eq:GMC} lies in the fact that such a field $X$ has a covariance kernel which is singular along the diagonal, being of the form
\[
    K(x,y)=\mathbb{E}[X(x)X(y)]=\log_+\frac{1}{|x-y|}+f(x,y),\quad \log_+(x)=\max(\log x,0)
\]
for some smooth function $f:D\times D\longrightarrow\mathbb{R}$. It therefore cannot be defined pointwise and is instead a distribution (in the sense of Schwartz) with probability one. To rigorously define its exponential, one typically regularizes $X$ to obtain a family $(X_t)_{t}$ of (pointwise defined) Gaussian fields converging to $X$ as $t\to\infty$, and defines $\mu_\gamma$ as the weak limit of 
\begin{align}\label{eq:reg_GMC}
    \mu_{\gamma,t}(\mathrm{d}x)=e^{\gamma X_t(x)-\frac{\gamma^2}{2}\mathbb{E}X_t(x)^2}\mathrm{d}x
\end{align}
as $t\to \infty$. This limit is known to be non-degenerate if $\gamma<\gamma_c=\sqrt{2d}$ \cite{RV10, BerestyckiElementary, Shamov}. At $\gamma=\gamma_c$, a non-trivial limit can still be obtained by considering the so-called \textit{derivative martingale} \cite{DRSV}
\begin{equation}\label{eq:critical_GMC}
        \mu_{\gamma_c,t}(\mathrm{d}x)=\big(\hspace{-2px}-\hspace{-2px}X_t(x)+\gamma_c\mathbb{E}[X_t(x)^2]\big)e^{\gamma_c X_t(x)-\frac{\gamma_c^2}{2}\mathbb{E}X_t(x)^2}\mathrm{d}x,
\end{equation}
or by multiplying \eqref{eq:reg_GMC} by a divergent factor known as the \textit{Seneta-Heyde} normalization \cite{Pow18, HRV18}, which depends on the regularization procedure. Both of these procedures yield the same limit up to a multiplicative constant, namely \textit{critical} GMC, which is non-atomic and supported on a set of Hausdorff measure zero \cite{PowellSurvey}. In this paper, our primary focus will be to study the asymptotic behavior of the Fourier coefficients of this measure.

\subsection{Harmonic analysis of multifractal measures} Given a measure $\mu$ on the unit circle identified with $[0,2\pi)$, one can ask whether or not its Fourier coefficients
\[
    \hat{\mu}(n)=\int_0^{2\pi} e^{-2\pi in\theta}\mathrm{d}\mu(\theta)
\]
vanish at infinity. Measures satisfying this condition are known as \textit{Rajchman measures} and form a class lying strictly between those of absolutely continuous and continuous measures (on $[0,2\pi)$). That these inclusions are proper is a consequence of Menšov's \cite{menshov} famous construction of a singular Rajchman measure, and Wiener's characterization of continuous measures \cite{Wiener} as those for which the $\hat{\mu}(n)$ vanish \textit{in mean}, respectively. Rajchman measures can alternatively be characterized by their common null sets (meaning sets to which they assign zero measure)---which are an intermediate between countable sets and sets of Lebesgue measure zero (see \cite{LyonsThesis, LyonsSurvey})---and are more generally related to the notion of sets of multiplicity and uniqueness in harmonic analysis \cite{kechris}.


In a recent work \cite{GarbanVargas}, Garban and Vargas initiated the study of the Fourier coefficients of (one-dimensional) GMC, proving in particular that this measure is Rajchman if $\gamma<\sqrt{2}$ \footnote{To be precise, they prove this for the GMC arising from the Gaussian free field on the unit circle.}. This resolved a longstanding open problem, and together with \cite{Falconer} was the first result of this kind for random multifractal measures. They also supplemented their result by a series of open problems that have since garnered some attention. 

For instance, define the \textit{Fourier dimension} of a measure $\mathrm{dim}_F(\mu)$ to be the largest $s$ for which $\hat{\mu}(n)\leq Cn^{-s/2}$ for some constant $C>0$, and let $\mu_{\gamma}$ denote GMC on the unit circle with parameter $\gamma$. Using a fourth moment calculation, Garban and Vargas proved that $\mathrm{dim}_F(\mu)\geq (1-\gamma^2/2)/2$ if $\gamma<1/\sqrt{2}$, and conjectured that
\begin{align}\label{eq:conj_fourierdimension}
    \mathrm{dim}_F(\mu_\gamma)=
\left\{
	\begin{array}{ll}
		1-\gamma^2  & \mbox{if } 0<\gamma < 1/\sqrt{2} \\
		(\sqrt{2}-\gamma)^2 & \mbox{if }  1/\sqrt{2}\leq\gamma<\sqrt{2}.
	\end{array}
\right.
\end{align}
This would imply that $\dim_F(\mu_\gamma)$ coincides with the \textit{correlation dimension} of $\mu_\gamma$, defined as the maximal $s$ for which
\[
    \iint_{[0,1]^2}\frac{\mu_\gamma(\mathrm{d}x)\mu_\gamma(\mathrm{d}y)}{|x-y|^s}<\infty \quad \text{a.s.},
\]
thereby making $\mu_\gamma$ a \textit{Salem measure}.

This conjecture \eqref{eq:conj_fourierdimension} was recently settled by Lin, Qiu and Tan \cite{LinQiuTanI, LinQiuTanII} in a series of works that develop a general framework to study the Fourier dimension of multiplicative chaos measures (including but not limited to one-dimensional GMC). The latter builds on a previous work of Chen, Han, Qiu and Wang \cite{ChenHanQiuWang}, which used the theory of \textit{vector-valued martingales} to compute the Fourier dimension of the Mandelbrot canonical cascade measure, answering a question of Mandelbrot from 1976.

By contrast, the picture for critical GMC is far murkier. It is well-known that the Fourier dimension of a measure is bounded above by its Hausdorff dimension, which implies that $\mathrm{dim}_F(\mu_{\gamma_c})=0$ when $\mu_{\gamma_c}$ is a critical GMC measure (i.e., the in-probability limit of \eqref{eq:critical_GMC} as $t\to \infty$). To the best of our knowledge, there are no further results in this direction, and the main open problem in \cite{GarbanVargas} was to determine whether or not critical GMC is Rajchman. 

Our goal is to provide a first step towards the resolution of this conjecture by proving the following. 

\subsection{Main results} In what follows, let $\mu=\mu_{\sqrt{2}}$ be the critical Gaussian multiplicative chaos measure arising from a $\star$--scale invariant field $X$ on $[0,1]$ (defined in Section \ref{sec:setup}), meaning that if $(X_t)_{t\geq 0}$ is a sequence of $\star$-scale cutoff approximations to $X$,
\[
    \mu(\mathrm{d}\theta):=\lim_{t\to\infty} \mu_t(\mathrm{d}\theta),\quad \text{ where } \mu_t(\mathrm{d}\theta):=\sqrt{t}e^{\sqrt{2}X_t(\theta)-\mathbb{E}X_t(\theta)^2}\mathrm{d}\theta.
\]
The limit is in probability in the space of Radon measures on $[0,1]$. Let $c_n$ denote the $n$-th Fourier coefficient of $\mu$. Our main result is the following.
\begin{theorem}
\label{thm:main} 
    For any $\alpha<1/4$, the sequence $((\log n)^{\alpha}c_n)_{n\geq 1}$ converges to zero in probability as $n$ tends to infinity.
\end{theorem}
\noindent  The polylogarithmic decay stems from what can loosely be described as a freezing phenomenon, which takes effect as soon as $\gamma\geq 1/\sqrt{2}$. This is explained in Section \ref{sec:method}, which also contains a heuristic explanation of why we believe the true order of magnitude of $|c_n|$ to be closer to $(\log n)^{-1}$.

Our proof is based on a second moment estimate for $c_n$ on a good event. While the approach is similar in spirit to Berestycki \cite{BerestyckiElementary} and Lacoin's \cite{Lacoin} constructions of subcritical and critical GMC, respectively, the presence of an oscillating factor in the integral defining $c_n$ here introduces significant complications. In particular, we require a more delicate notion of ``good" points than in \cite{BerestyckiElementary,Lacoin}, at which the underlying field $X_t(\theta)$ is controlled at scales $t\geq r_n$, where $r_n$ is a function of the frequency $n$ of the Fourier coefficient at hand. Doing so instead of controlling all scales makes the problem more tractable, at the cost of powers of $(\log n)$. An overview of the proof is given in Section \ref{sec:method}.


By comparison, Garban and Vargas bypass the explicit study of $c_n$ (when $\gamma>1/\sqrt{2}$) by appealing to the Riemann-Lebesgue lemma. To show that $c_n\to 0$ almost surely, they note that it suffices by the latter to show that $\mu_\gamma^{*d}$ is in $L^1$ for sufficiently large $d$ (where $^{*d}$ denotes $d$-fold convolution). Unfortunately, this approach breaks down at criticality as their argument would force one to take $d=\infty$.

\subsection{Proof elements}\label{sec:method} 
Let $X$ be a $\star$--scale invariant field as above, and consider the multiplicative chaos measure $\mu_\gamma:=\lim_{t\to \infty}
\mu_{\gamma,t}$ associated to $X$ for $\gamma\in (1/\sqrt{2},\sqrt{2}]$, where
\begin{equation}\label{eq:allchaos}
     \mu_{\gamma,t}(\mathrm{d}\theta):=\big(\sqrt{t}\cdot\mathbf{1}(\gamma=\sqrt{2})+\mathbf{1}(\gamma\neq \sqrt{2})\big)e^{\gamma X_t(\theta)-(\gamma^2/2)t}.
\end{equation}
Our goal is to estimate $\mathbb{E}[|c_{n,t}|^2]$, where $c_{n,t}:=\int e^{in\theta}{\mu_{\sqrt{2},t}(\mathrm{d}\theta)}.$

Before this, it is helpful to understand the second moment of the total mass $\int\mu_{\gamma,t}(\mathrm{d}\theta)$ for $\gamma<\sqrt{2}$, without the oscillating factor $e^{in\theta}$. Since $X_t$ is logarithmically correlated, Fubini's theorem implies that
\[
    \mathbb{E}\bigg[\bigg(\int\mu_{\gamma,t}(\mathrm{d}\theta)\bigg)^2\bigg]\approx \iint \frac{\mathrm{d}\theta_1\mathrm{d}\theta_2}{|\theta_1-\theta_2|^{\gamma^2}},
\]
which is finite if and only if $\gamma^2<1$ (the so-called ``$L^2$ phase" of GMC). For larger $\gamma$, the expectation is inflated by points $\theta$ where $X_t(\theta)$ is atypically large at many scales $t$. To circumvent this, one usually restricts the integral to ``good" points $\theta\in G$ for which $(X_t(\theta),t\geq 0)$ is well-behaved\footnote{This is the main idea in \cite{BerestyckiElementary, Lacoin}, for instance.}. At $\gamma=\sqrt{2}$, this is achieved by working on the event that $\{\forall \theta\in[0,1]:\theta\in{G}\}$, on which the bound
\begin{align}\label{eq:trivialbound}
\iint\mu_{\sqrt{2},t}(\mathrm{d}\theta_1)\mu_{\sqrt{2},t}(\mathrm{d}\theta_2)\leq\iint\mathbf{1}(\theta_1,\theta_2\in {G})\mu_{\sqrt{2},t}(\mathrm{d}\theta_1)\mu_{\sqrt{2},t}(\mathrm{d}\theta_2)
\end{align}
holds by monotonicity, and the right-hand side has finite expectation.

An evident hurdle that arises when trying to adapt this approach to 
\begin{equation}\label{eq:fouriercoeffintro}
    |c_{n,t}|^2=\iint e^{in(\theta_2-\theta_1)}\mu_{\sqrt{2},t}(\mathrm{d}\theta_1)\mu_{\sqrt{2},t}(\mathrm{d}\theta_2)
\end{equation}
is that one cannot directly bound as in \eqref{eq:trivialbound}, due to the factor $e^{in(\theta_2-\theta_1)}$. In particular, the set $G$ must be picked so as to guarantee the positivity and real-valuedness of
\begin{align}\label{eq:fourierexample}
    \iint e^{in(\theta_2-\theta_1)}\mathbf{1}\big(\theta_1,\theta_2\in {G}\big)\mu_{\sqrt{2},t}(\mathrm{d}\theta_1)\mu_{\sqrt{2},t}(\mathrm{d}\theta_2).
\end{align}
Furthermore, if we hope to get savings from the oscillations in the integral above, we need some control over the regularity of
\[ 
    \mathbb{E}\big[e^{\sqrt{2}(X_t(\theta_1)+X_t(\theta_2))}\mathbf{1}(\theta_1,\theta_2\in {G})\big]
\]
as a function of the gap $$\Delta=(\theta_2-\theta_1).$$ This further limits the choice of ${G}$ to one for which the event $\{\theta_1,\theta_2\in {G}\}$ is relatively simple, while being restrictive enough to make the expectation of \eqref{eq:fourierexample} of the correct order (at the very least, compensating for the additional $\sqrt{t}$ normalization present at criticality \eqref{eq:allchaos}). 

\bigskip 

We now explain why we believe that the order of $|c_n|$ is typically closer to $(\log n)^{-1}$. To begin with, a variant of the argument above reveals that the dominant contribution to \eqref{eq:fouriercoeffintro} comes from $\log|\Delta|^{-1}\asymp \log n$. For concreteness, consider the contribution to $|c_{n,t}|^2$ from $\Delta\in [n^{-100},n^{-1}]$, which is 
\begin{equation}\label{eq:fourierintegral}
        \asymp te^{-2t}\iint_{|\theta_2-\theta_1|\in[n^{-100},n^{-1}]} e^{\sqrt{2}(X_t(\theta_1)+X_t(\theta_2))}\mathrm{d}\theta_1\mathrm{d}\theta_2.
\end{equation}
(We ignore the $e^{in\Delta}$ factor since $n\Delta\leq 1$ here.)

Since $X_t$ is logarithmically correlated, one can show that for any $\theta_1,\theta_2$
\[ 
    X_s(\theta_1)\approx X_{s}(\theta_2),\quad \forall s\leq r(\Delta),
\]
with high probability
(see, e.g., Lemma 16 in \cite{DRSV}), where $r(\Delta):=\log|\Delta|^{-1}$ is the branching time. It follows that \eqref{eq:fourierintegral} is
\[
    \approx te^{-2t}\iint e^{2\sqrt{2}X_{r(\Delta)}(\theta_1)} e^{\sqrt{2}(X_t(\theta_1)-X_{r(\Delta)}(\theta_1))}e^{\sqrt{2}(X_t(\theta_2)-X_{r(\Delta)}(\theta_2))}\mathrm{d}\theta_1\mathrm{d}\theta_2.
\]
Furthermore, $e^{\sqrt{2}(X_t(\theta_i)-X_{r(\Delta)}(\theta_i))}$ will typically be of the order of $e^{t-r(\Delta)}/\sqrt{t-r(\Delta)}$, which will allow us to bound the integral above by
\begin{equation}\label{eq:almosthere}
    \int_{n^{-100}}^{n^{-1}} e^{-2r(\Delta)}\bigg(\int_0^1 e^{2\sqrt{2}X_{r(\Delta)}(\theta_1)} \mathrm{d}\theta_1\bigg)\mathrm{d}\Delta
\end{equation}
as $t\to \infty$ (noting the change of variables and that $r(\Delta)$ is bounded in $t$). 

For large $n$, we recognize the integral over $\theta_1$ as nothing but the (approximate, unnormalized) total mass of a \textit{supercritical} chaos measure, this time of parameter $\tilde{\gamma}=2\sqrt{2}$.\footnote{The emergence of such a chaos was also found by Garban and Vargas (see Theorem 1.3 in \cite{GarbanVargas}), where it is \textit{subcritical} since they only consider $2\gamma<\sqrt{2}$.} Recalling that $r(\Delta)=\log |\Delta|^{-1}$, this is known to have order 
\[
    (\log |\Delta|^{-1})^{-(3\tilde{\gamma}/(2\sqrt{2}))}e^{r(\Delta)(\tilde{\gamma}^2/2-(\tilde{\gamma}/\sqrt{2}-1)^2)}=e^{3r(\Delta)}(\log |\Delta|^{-1})^{-3}
\]
(see Theorem 2.2 in \cite{glassy}). Inserting this into \eqref{eq:almosthere},  we conclude that when $t\to \infty$,
\[
|c_{n,t}|^2\leq C\int_{n^{-100}}^{n^{-1}}\frac{1}{(\log |\Delta|^{-1})^{3}}\frac{\mathrm{d\Delta}}{\Delta} = C\int_{\log n}^{100\log n}\frac{\mathrm{d}u}{u^{3}} =O\big( (\log n)^{-2}\big)
\]
with high probability, for some constant $C>0$. In particular, this suggests a non-trivial contribution from scales $\Delta\approx n^{-a}$, even when $a>0$ is large (see Remark \ref{remark:leading_order}). 

\subsection{Conjectures for subcritical chaos}

While we do not attempt to make the heuristics from the previous section rigorous for $\gamma\in(1/\sqrt{2},\sqrt{2})$, they can nonetheless be used to make predictions in that range. There, the analogue of the integral in \eqref{eq:almosthere} contains the total mass of a chaos with parameter $\tilde{\gamma}=2\gamma$ and is therefore of the order of
\[
     \int_{n^{-100}}^{n^{-1}}e^{-\gamma^2r(\Delta)}\bigg(\frac{e^{r(\Delta)(2\gamma^2-(\sqrt{2}\gamma-1)^2)}}{(\log |\Delta|^{-1})^{3\gamma/\sqrt{2}}}\bigg)\mathrm{d}\Delta.
\]
By contrast with the critical case, the mass in this integral is concentrated around $\Delta\approx n^{-1}$, and we can factor out the correct normalization via the substitution $u=n\Delta$. What remains is approximately the integral of $1/|u|^{\kappa}$ over $[0,1]$ for some $\kappa<1$, which is integrable at zero.
\begin{conj} Let $\gamma \in (1/\sqrt{2},\sqrt{2})$, $\mu_\gamma$ denote the Gaussian multiplicative chaos measure on $[0,1]$ with parameter $\gamma$, and $c_n$ its $n$-th Fourier coefficient. Then 
\[
    (\log n)^{3\gamma/(2\sqrt{2})} n^{(\sqrt{2}-\gamma)^2/2}|c_n|
\]
converges in distribution to a (non-trivial) limiting random variable as $n\to\infty$.
\end{conj}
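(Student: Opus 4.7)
The plan is to combine the good-event second-moment machinery developed in the proof of Theorem~\ref{thm:main} with the glassy-phase asymptotics for supercritical GMC recalled in Section~\ref{sec:method}. As in the critical case, write
\[
    |c_{n,t}|^2 \;=\; \iint e^{in(\theta_2-\theta_1)}\,\mu_{\gamma,t}(d\theta_1)\,\mu_{\gamma,t}(d\theta_2),
\]
and restrict the double integral to a good event $\mathcal{G}_n$ ensuring that the trajectories $s \mapsto X_s(\theta_i)$ remain inside a Bramson-type barrier up to scale $r_n := \log n$. This event should be measurable with respect to $\sigma(X_s : s \leq r_n)$, so that $L^2$ estimates at fine scales $s \in (r_n, t)$ remain available; following the proof of Theorem~\ref{thm:main}, one arranges $\mathbb{P}(\mathcal{G}_n^c) \to 0$ and shows that the contribution off $\mathcal{G}_n$ is negligible after the conjectured normalization.

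\medskip

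The next step is a localization. One shows that pairs $(\theta_1,\theta_2)$ with $r_\Delta$ either much smaller than $r_n$ (i.e.\ $|\theta_2-\theta_1|\gg 1/n$, killed by the oscillation $e^{in\Delta}$ via integration by parts on the outer $\Delta$-integral after taking expectations) or much larger than $r_n$ (killed by $L^2$ decay once $|\theta_2-\theta_1|\ll 1/n$) contribute $o\bigl(n^{-(\sqrt{2}-\gamma)^2}(\log n)^{-3\gamma/\sqrt{2}}\bigr)$. The change of variables $u = n(\theta_2-\theta_1)$, $\bar\theta = (\theta_1+\theta_2)/2$, combined with the $\star$-scale invariance of $X$ to factor out the fine scales, then yields a representation
\[
    n^{(\sqrt{2}-\gamma)^2}\,|c_{n,t}|^2 \;\approx\; \int e^{iu}\, |u|^{(\gamma-\sqrt{2})^2-1}\,\mathcal{M}_n(u)\,du,
\]
where $\mathcal{M}_n(u)$ is (up to deterministic exponential normalization) the total mass of the Wick exponential of $2\gamma\,X_{r_n-\log|u|}$ over an interval of length $\asymp 1/n$; note that $2\gamma>\sqrt{2}$, so this is a \emph{supercritical} chaos.

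\medskip

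At this point one invokes the glassy-phase asymptotics cited in Section~\ref{sec:method} (cf.\ Theorem~2.2 of~\cite{glassy}): after multiplying $\mathcal{M}_n(u)$ by $(\log n)^{3\gamma/\sqrt{2}}$ one obtains the Seneta--Heyde normalization, and the resulting quantity converges in distribution to a non-degenerate positive random variable $\mathcal{Z}(u)$, a power of the critical derivative martingale associated to $X$ decorated by the extremal process of near-maxima of $X_{r_n}$. Assuming joint convergence of $\{\mathcal{M}_n(u)\}_u$, passing to the limit first in $t \to \infty$ and then in $n \to \infty$ gives
\[
    (\log n)^{3\gamma/\sqrt{2}}\,n^{(\sqrt{2}-\gamma)^2}\,|c_{n,t}|^2 \;\Longrightarrow\; \int e^{iu}\, |u|^{(\gamma-\sqrt{2})^2-1}\,\mathcal{Z}(u)\,du,
\]
which yields the conjecture upon taking square roots. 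Tightness of the sequence $(\log n)^{3\gamma/(2\sqrt{2})}n^{(\sqrt{2}-\gamma)^2/2}|c_n|$ comes from a matching second moment upper bound on $\mathcal{G}_n$, while non-degeneracy of the limit follows from a Paley--Zygmund inequality applied to a thinned subfamily of good points.

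\medskip

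The main obstacle is the \emph{joint} convergence of $\mathcal{M}_n(u)$ across values of $u$, coupled with the oscillating factor $e^{iu}$. Marginal Seneta--Heyde convergence at each fixed $u$ is classical, but the $u$-integral relies on cancellations that are invisible under pointwise convergence; one must establish a functional version of the glassy-phase convergence, with joint control over the supercritical mass at correlated spatial locations. The most natural route is through a continuous-field analogue of the branching random walk convergence theorems of~\cite{Madaule}, transferred to the GMC setting via $\star$-scale invariance --- but the oscillatory outer integral, absent in both~\cite{Madaule} and~\cite{glassy}, introduces a genuinely new difficulty beyond what is required for Theorem~\ref{thm:main}.
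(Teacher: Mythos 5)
This statement is an open conjecture; the paper does not prove it and explicitly declines to make the underlying heuristics rigorous (``While we do not attempt to make the heuristics from the previous section rigorous for $\gamma\in(1/\sqrt{2},\sqrt{2})$\dots''). So there is no proof in the paper to compare against --- only the back-of-envelope derivation of the normalizing exponents given in Section~\ref{sec:method}. Your plan tracks that heuristic faithfully: the appearance of the supercritical chaos at parameter $\tilde\gamma=2\gamma$, the glassy-phase $(\log n)^{-3\gamma/\sqrt{2}}$ correction, and the concentration of the $\Delta$-integral around $|\Delta|\asymp 1/n$ all match the paper's sketch, and the normalizing exponents you recover agree with the conjectured ones. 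You are also candid, and correct, that the proposal is not a proof, with the decisive missing ingredient being a \emph{joint} functional version of the Seneta--Heyde convergence for the family $\{\mathcal{M}_n(u)\}_u$ compatible with the oscillatory outer integral.

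Two further points that would need care even granting the functional convergence. First, the restriction of the good event to $\sigma(X_s:s\le r_n)$ so that ``$L^2$ estimates at fine scales remain available'' only works for $\gamma<1$: for $\gamma\in[1,\sqrt 2)$ the fine-scale chaos is outside the $L^2$ phase, so, as in the paper's treatment of the critical case (the definition of $\mathcal{G}_{n,t}^{(A)}$ in Equation~\eqref{eq:goodset}), a barrier at scales $s>r_n$ cannot be dispensed with; your plan therefore does not cover the whole conjectured range uniformly. Second, the proposed limiting object $\int e^{iu}\,|u|^{(\gamma-\sqrt 2)^2-1}\,\mathcal{Z}(u)\,du$ must be almost surely real and nonnegative, since it is the limit of $|c_{n,t}|^2$. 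This is not automatic from marginal convergence of each $\mathcal{M}_n(u)$: one needs the integral to run over $u$ of both signs and the family $\{\mathcal{Z}(u)\}$ to have enough symmetry (arising from $\mathcal{Z}(u)$ and $\mathcal{Z}(-u)$ sharing the same coarse field) to make the imaginary parts cancel and the real part positive. Identifying and controlling this joint structure is part of the genuine difficulty you flag, and is worth making explicit in any serious attack on the conjecture.
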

\begin{remark}
    We recognize the Fourier dimension of $\mu_\gamma$ in the exponent of $n$ here.
\end{remark}
Lastly, when $\gamma=1/\sqrt{2}$, the chaos of parameter $2\gamma=\sqrt{2}$ is critical and the heuristic above yields the following prediction.
\begin{conj} Let $\mu_\gamma$ denote the Gaussian multiplicative chaos measure on $[0,1]$ with parameter $\gamma=1/\sqrt{2}$, and $c_n$ its $n$-th Fourier coefficient. Then 
\[
    (\log n)^{1/4} n^{1/4}|c_n|
\]
converges in distribution to a (non-trivial) limiting random variable as $n\to\infty$.
\end{conj}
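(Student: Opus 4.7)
The plan is to combine a scale decomposition with a conditional central limit theorem, identifying the limiting law as the modulus of a complex Gaussian whose random variance is driven by the total mass of critical GMC. Set $K_n := \log n$ and decompose the $\star$-scale invariant field as $X = X_{K_n} + \tilde{X}_n$, where $\tilde{X}_n := X - X_{K_n}$ is independent of $X_{K_n}$ and has correlation length $e^{-K_n} = 1/n$. The chaos measure factorizes as $\mu(\mathrm{d}\theta) = W_n(\theta)\, \mu_n^{\mathrm{sm}}(\mathrm{d}\theta)$, where $W_n(\theta) := \exp\!\left(\tfrac{1}{\sqrt{2}} X_{K_n}(\theta) - \tfrac{1}{4}\mathbb{E}X_{K_n}(\theta)^2\right)$ is the macroscopic weight and $\mu_n^{\mathrm{sm}}$ is the GMC of $\tilde{X}_n$. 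Partition $[0,1]$ into $n$ intervals $I_k$ of length $1/n$, set $\zeta_k := \int_{I_k} e^{in\theta}\, \mu_n^{\mathrm{sm}}(\mathrm{d}\theta)$, and approximate
\[
c_n \;\approx\; \sum_{k=1}^n W_n(k/n)\, \zeta_k.
\]
The $\zeta_k$'s are approximately independent (since $\tilde{X}_n$ decorrelates at scale $1/n$), mean-zero complex random variables, with $\mathbb{E}[|\zeta_k|^2] = \sigma_0^2/n^2\,(1+o(1))$ for an explicit constant $\sigma_0^2 = 2\int_0^1 \cos(u)\, u^{-1/2}\,\mathrm{d}u$ coming from the integrated oscillating covariance at scale $1/n$.

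The first main step is a conditional CLT: given $X_{K_n}$, the weighted sum $\sum_k W_n(k/n)\zeta_k$ is asymptotically complex Gaussian with variance
\[
V_n \;:=\; \frac{\sigma_0^2}{n^2}\sum_{k=1}^n W_n(k/n)^2 \;\approx\; \frac{\sigma_0^2}{n}\int_0^1 W_n(\theta)^2\, \mathrm{d}\theta \;=\; \frac{\sigma_0^2}{\sqrt{n}}\int_0^1 e^{\sqrt{2}X_{K_n}(\theta) - K_n}\, \mathrm{d}\theta,
\]
using the identity $W_n^2 = n^{1/2} e^{\sqrt{2} X_{K_n}-K_n}$. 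The right-hand integral is exactly the un-normalized partition function of critical GMC at scale $K_n$, so by the convergence of the renormalized critical chaos $\sqrt{t}\int e^{\sqrt{2}X_t - t}\,\mathrm{d}\theta \to M$ (with $M = \mu_{\sqrt{2}}([0,1])$) established in \cite{Pow18, Lacoin}, we obtain $V_n \sim \sigma_0^2\, M / \sqrt{n\log n}$. Combining with the conditional CLT yields
\[
(\log n)^{1/4}\, n^{1/4}\, |c_n| \;\xrightarrow{d}\; \sigma_0 \sqrt{M}\, |Z|,
\]
where $Z$ is a standard complex Gaussian independent of $M$; since $M$ is a positive non-degenerate random variable, this gives a non-trivial limiting law as claimed.

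The main obstacle will be making the conditional CLT rigorous despite several sources of dependence. The $\zeta_k$'s are only approximately independent, as adjacent intervals share correlations through $\tilde{X}_n$ in boundary layers of width $\sim 1/n$, so a careful Lindeberg-type argument is required. More seriously, the weighted sum $\sum_k W_n(k/n)^2$ is a critical-chaos partition function whose limit $M$ has infinite expectation, which means any approach relying on uniform $L^2$ control of $(\log n)^{1/4} n^{1/4} c_n$ will fail---indeed, $\mathbb{E}[|c_n|^2] \sim C/\sqrt{n}$ already overshoots the conjectured order by a factor of $\sqrt{\log n}$. This forces a truncation to a good event $\mathcal{G}_n$ on which no $W_n(k/n)$ is atypically large, in the spirit of the strategy used for Theorem \ref{thm:main}, with the contribution of $\mathcal{G}_n^c$ controlled in probability. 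Additional technical points include justifying the Riemann-sum approximation for the chaos-weighted integral (perhaps by working with a coarser partition at scale $(\log n)/n$ so that $W_n$ is nearly constant on each subinterval, recovering independence of the $\zeta_k$'s up to negligible errors), and establishing the joint convergence of $V_n$ and the normalized sum, so that a Slutsky-type argument delivers the unconditional distributional limit.
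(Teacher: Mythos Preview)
The statement you are addressing is a \emph{conjecture} in the paper, not a theorem: the authors offer no proof, only a one-paragraph heuristic based on the observation that, in their second-moment framework, the emergent chaos of parameter $2\gamma$ is exactly critical when $\gamma=1/\sqrt{2}$, which they expect to introduce a $(\log n)^{-1/4}$ correction to the power-law decay. There is therefore no proof in the paper to compare your attempt against.

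Your proposal is not a proof either, and you say so yourself: it is a research programme built around a conditional CLT with random variance $V_n$ driven by the critical mass $M=\mu_{\sqrt{2}}([0,1])$. The scaling computation is consistent with the conjecture and with the paper's heuristic, and the mechanism you isolate---that the squared macroscopic weight $W_n^2$ is exactly the unnormalised critical chaos density, so $\sqrt{\log n}\,V_n$ converges---is a clean way to see where the logarithm comes from. This is a genuinely different route from the paper's, which never attempts a CLT and instead reads the exponent off the tail of an integral in $\Delta$; your approach has the advantage of predicting the limiting \emph{law} (a Gaussian mixture $\sigma_0\sqrt{M}\,|Z|$) rather than just the order of magnitude.

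That said, the obstacles you list are real and at present unresolved. The most serious is that $M$ has infinite mean, so the Lindeberg condition cannot be verified in $L^2$ and any CLT must be run on a high-probability good event; making this compatible with a distributional limit (not just tightness) requires a much more delicate argument than the paper's Theorem~\ref{thm:main}, which only proves tightness at criticality. A second issue is the Riemann-sum step: replacing $W_n(\theta)$ by $W_n(k/n)$ on $I_k$ introduces an error controlled by the modulus of continuity of $X_{K_n}$ at scale $1/n$, which is exactly the decorrelation scale and hence not obviously negligible. Until these are handled, what you have is a heuristic on par with the paper's own---plausible and informative, but not a proof.
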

\begin{remark}
    As explained in \cite{GarbanVargas}, Section 1.2.3, $n^{1/4}c_n$ is expected to behave like the $n$-th Fourier coefficient of the Holomorphic Multiplicative Chaos (HMC), which is known to be of order $(\log n)^{-1/4}$ \cite{SoundZaman}. 
\end{remark}

\subsection{Organisation and notation} The paper is organised as follows. Section 2 contains various preliminary results on logarithmically correlated fields needed for the proof of Theorem \ref{thm:main}, which is then proved in Sections \ref{sec:main} and \ref{sec:4}.

We will use standard asymptotic notation, writing $f(T) = o(g(T))$ to mean that $|f(T)/g(T)|\to_{T\to\infty} 0$ and $f(T) = O(g(T))$ or $f(T) \ll g(T)$ to mean that
$\limsup_{T\to\infty} |f (T )/g(T )|$ is bounded. A subscripted parameter next to $o$, $O$ or $\ll$ means that the implicit constant is allowed to depend on said parameter. 

We also use 
\[
    (a\land b):=\min(a,b),\quad (a\lor b)=\max(a,b),
\]
and adopt the convention $\log x=\infty$ when $x<0$ so that, for instance, $\log(x)\land 1=1$ in this case.
\subsection{Acknowledgements} We would like to thank Zehua He for their careful reading of the manuscript, and in particular for pointing out an issue with a previous version of this work.

\section{Preliminary estimates}\label{sec:setup}
\subsection{Properties of $\star$--scale invariant fields.} We will work with chaos measures associated to a $\star$--scale invariant field $X$ on $[0,1]$ (viewed as being $1$--periodic) with $\star$--scale cut--off approximations $(X_t;t\geq0)$ (see \cite{PowellSurvey}). Our setup is essentially the same as the one in \cite{DRSV} for $d=1$, and we refer to \cite{allezrhodesvargas} for the construction of an approximating sequence $X_t$ satisfying the properties below.

\bigskip 

In what follows, $(X_t)_{t\geq 0}$ will denote a family of coupled, centered Gaussian fields on $[0,1]$ with covariances given by
\begin{align}\label{eq:*scale}
  \forall \theta_1<\theta_2,\quad\mathbb{E}[X_t(\theta_1)X_s(\theta_2)]=K_{s\land t}(\theta_2-\theta_1) :=\int_1^{e^{s\land t}}\frac{k(u(\theta_2-\theta_1))}{u}\mathrm{d}u, 
\end{align}
for some fixed \textit{seed function} $k\in C^2(\mathbb{R})$ associated to $X$, where $s\land t=\min(s,t)$. For convenience, we will assume that $k(0)=1$, $k$ is even and that it vanishes outside of the unit ball around the origin. The following properties then hold by construction:
\begin{enumerate}
    \item $\mathbb{E}[X_t(\theta)^2]=t$ for every $\theta\in [0,1]$. 
    \item For every $\theta\in [0,1]$, $(X_t(\theta);t\geq 0)$ has independent increments, making $t\mapsto X_t(\theta)$ a standard Brownian motion $B_t$ starting from zero for every $\theta$.
    \item $(X_t(\theta_1)-X_{t_0}(\theta_1);t\geq t_0)$ is independent from $(X_t(\theta_2)-X_{t_0}(\theta_2);t\geq t_0)$ for any $\theta_1,\theta_2$ satisfying $|\theta_1-\theta_2|\geq e^{-t_0}$,
\end{enumerate}  
An exact description for the joint law of $(X_t(\theta_1),X_t(\theta_2))_{t}$ can be found in \cite{DRSV}, Lemma 16, which roughly states that $X_k(\theta_1)$ and $X_k(\theta_2)$ are almost perfectly correlated up to time $k=\log 1/|\theta_1-\theta_2|$ (after which they evolve independently, as per property (3) above). For our purposes, we will only need the following estimate reflecting this correlation structure.
\begin{lemma}Let $\|k'\|_\infty=\sup_{x\in[-1,1]} |k'(x)|$. Fix $s>0$. Then uniformly in $\Delta\in [-1,1]\setminus\{0\}$,
\begin{align}\label{eq:Kestimate1}
    K_{s}(\Delta)=s\land (\log|\Delta|^{-1})+O(\|k'\|_{\infty}).
\end{align}

\end{lemma}
\begin{proof} Using the fact that $k$ is supported on $[-1,1]$ and $k(0)=1$,
    \begin{equation}\label{eq:kexpansion}
        K_s(\Delta)=\int_{1}^{e^s\land |\Delta|^{-1}}\frac{k(u|\Delta|)}{u}\mathrm{d}u= \int_1^{e^s\land |\Delta|^{-1}}\frac{\mathrm{d}u}{u}+\int_{1}^{e^s\land |\Delta|^{-1}}\frac{k(u|\Delta|)-k(0)}{u}\mathrm{d}u.
    \end{equation}
    The first term is equal to the main term in \eqref{eq:Kestimate1}. By the fundamental theorem of calculus, 
    \[
        |k(u|\Delta|)-k(0)|\leq \int_0^{u|\Delta|}|k'(x)|\mathrm{d}x \leq \|k'\|_\infty u|\Delta|
    \]
    for any $u\in [1, e^s\land |\Delta|^{-1}]$. The second term in \eqref{eq:kexpansion} is therefore $O(\|k'\|_\infty)$ by the triangle inequality.
\end{proof}
For any $t,r>0$, we define the \textit{shifted} field
\begin{equation}\label{eq:shifted}
    \big(X_t^{(r)}(\theta)\big)_{\theta\in [0,1]}:=\big((X_{t+r}-X_r)(\theta)\big)_{\theta\in [0,1]},
\end{equation}
noting that $(X_t^{(r)}(\theta))_{\theta\in [0,1]}$ is $\star-$scale invariant with kernel $k^{(r)}(x):=k(e^{r}x)$ since
\[
    \mathbb{E}[X_s^{(r)}(\theta_1)X_s^{(r)}(\theta_2)]=\bigg(\int_1^{e^{s+r}}-\int_{1}^{e^{s}}\bigg)\frac{k(u(x-y))}{u}\mathrm{d}u=\int_1^{e^{s}}\frac{k^{(r)}(u(x-y))}{u}\mathrm{d}u.
\]
In particular, $t\mapsto X_t^{(r)}(\theta)$ is a standard Brownian motion for any $r>0$ and $\theta\in [0,1]$.

Our argument will require some control on the large values of $(X_{t}(\theta))_{\theta\in [0,1]}$ at multiple scales $t$. For fixed, large $t$, the typical size of $\max_{\theta\in [0,1]}X_t(\theta)$ is known to fluctuate around
\begin{align}\label{eq:m(t)}
    m(t):=\sqrt{2}t-\frac{3}{2\sqrt{2}}\log t
\end{align}
(see \cite{DRSV, Madaule}), which is what one expects from any log--correlated field (see, e.g. \cite{ArguinSurvey}). In fact, one can typically go further and show that the {paths} $s\mapsto X_{s}(\theta), s\leq t$, for which $X_t(\theta)$ is close to the maximum cannot fluctuate too wildly. Instead, they display approximately linear growth while remaining under a suitably chosen barrier function. This phenomenon was first leveraged by Bramson \cite{Bramson} to study in the maximal displacement of branching Brownian motion, and underpins much of the subsequent work on extrema of log--correlated fields \cite{BramsonDingZeitouni, DingBramsonZeitouniGFF, Aidekon, DingRoyZeitouni}. 

For our purposes, we will only require the following upper bound of this type, due to Lacoin (\cite{Lacoin}, Proposition 2.6).

\begin{proposition}\label{prop:lacoin} Let $(X_t)_{t\geq 0}$ be defined as in Equation \eqref{eq:*scale} and 
\begin{align}\label{eq:U(t)}
    U(t):= \begin{cases}
        \sqrt{2}t-\frac{\log t}{2\sqrt{2}}+\frac{4\log\log t}{\sqrt{2}} &\quad \text{if } s\geq  e, \\
         \infty &\quad \text{if } s<e.
     \end{cases}
\end{align}
Then as $A\to\infty$,
\[
    \mathbb{P}\bigg(\exists{t\geq 0},\exists{\theta\in [0,1]}, X_t(\theta)>U(t)+A\bigg)\to 0.
\]
\end{proposition}
\begin{remark}
While \cite{Lacoin} assumes that the seed function $k$ is in $C^\infty(\mathbb{R})$, the proof of Proposition~2.6 therein only requires that $k \in C^2(\mathbb{R})$.
\end{remark}

\subsection{Brownian estimates} We record the following estimates for the probability that a Brownian motion stays under a barrier starting at time $e$ (in light of \eqref{eq:U(t)}). Such events will arise from restricting to samples for which the event in Proposition \ref{prop:lacoin} holds. 

\begin{lemma}
    Let $B_t$ be a standard linear Brownian motion. Then there exists a constant $C>0$ such that for any $t>e$, $a>0$,
    \begin{align}\label{eq:Bsup}
        \mathbb{P}\Big(\sup_{s\in [e,t]}B_s\leq a\Big) \leq C\frac{(a+1)}{\sqrt{t}+1}.
    \end{align}
    Furthermore, if $a,b>0$ and $\mathbb{P}_{(0,a)}^{(t,b)}$ denotes the law of a Brownian bridge of length $t>e$ from $a$ to $b$, then
    \begin{align}\label{eq:ballot}
        \mathbb{P}_{(0,a)}^{(t,b)}\Big(B_s\geq 0,\forall s\in [e,t]\Big) \leq C \frac{(a+1)(b+1)}{t+1}.
    \end{align}
\end{lemma}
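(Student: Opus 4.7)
Both estimates are standard and reduce, via conditioning on the Gaussian value $B_e$ and the Markov/bridge property, to the classical reflection-principle and ballot identities at time $0$.

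For \eqref{eq:Bsup}, I would condition on $B_e \sim N(0,e)$ and use that $(B_{e+u}-B_e)_{u\geq 0}$ is an independent standard Brownian motion $W$, so
\[
    \mathbb{P}\Big(\sup_{s\in[e,t]}B_s\leq a\Big)=\mathbb{E}\Big[\mathbb{P}\Big(\sup_{u\in[0,t-e]}W_u\leq a-B_e\,\Big|\,B_e\Big)\mathbf{1}(B_e\leq a)\Big].
\]
The reflection principle gives $\mathbb{P}(\sup_{u\leq T}W_u\leq c)=2\Phi(c/\sqrt{T})-1\leq\sqrt{2/\pi}\,c/\sqrt{T}$ for $c\geq 0$, so the right-hand side is at most $\sqrt{2/\pi}\,\mathbb{E}[(a-B_e)_+]/\sqrt{t-e}$. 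Bounding $\mathbb{E}[(a-B_e)_+]\leq a+\mathbb{E}|B_e|\ll a+1$ and using $\sqrt{t-e}\asymp\sqrt{t}+1$ for $t\geq 2e$ yields the claim in that range. For $t\in(e,2e]$, the trivial bound $\mathbb{P}(\cdot)\leq 1$ suffices because $(a+1)/(\sqrt{t}+1)$ is bounded below by a positive absolute constant.

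For \eqref{eq:ballot}, I would again condition on $B_e$: under $\mathbb{P}_{(0,a)}^{(t,b)}$ the variable $B_e$ is Gaussian with mean $m=(1-e/t)a+(e/t)b$ and variance $\sigma^2=e(t-e)/t$, and given $B_e=y$ the restriction of $B$ to $[e,t]$ is a Brownian bridge from $y$ to $b$ of length $t-e$. The classical ballot formula gives its probability of remaining positive as $1-e^{-2yb/(t-e)}$ when $y,b>0$, and zero otherwise. Integrating against the Gaussian density of $B_e$ and using the standard identity $\mathbb{E}[e^{-\lambda Y}\mathbf{1}(Y>0)]=e^{-\lambda m+\lambda^2\sigma^2/2}\Phi((m-\lambda\sigma^2)/\sigma)$ with $\lambda=2b/(t-e)$, a direct computation yields the algebraic cancellation $-\lambda m+\lambda^2\sigma^2/2=-2ab/t$ and the closed form
\[
    \mathbb{P}_{(0,a)}^{(t,b)}\Big(B_s\geq 0,\forall s\in[e,t]\Big)=\Phi(m/\sigma)-e^{-2ab/t}\,\Phi((m-\lambda\sigma^2)/\sigma).
\]
Splitting this as $[\Phi(m/\sigma)-\Phi((m-\lambda\sigma^2)/\sigma)]+\Phi((m-\lambda\sigma^2)/\sigma)(1-e^{-2ab/t})$, the bracketed difference is bounded by $\|\phi\|_\infty\cdot\lambda\sigma\ll b/\sqrt{t(t-e)}$ and the second term by $\min(1,2ab/t)$. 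For $t\geq 2e$ both pieces are $O((a+1)(b+1)/(t+1))$, and the leftover range $t\in(e,2e]$ is again trivial.

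The main (mild) obstacle is in \eqref{eq:ballot}: the naive linearization $1-e^{-x}\leq x$ is too weak once $b$ is comparable to $t/\sqrt{e}$, so one really needs to carry out the exact Gaussian integral. The algebraic identity $-\lambda m+\lambda^2\sigma^2/2=-2ab/t$ is precisely what makes the bound collapse to the expected form $(a+1)(b+1)/(t+1)$; everything else is routine.
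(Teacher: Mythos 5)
Your proposal is correct and follows essentially the same route as the paper, whose proof is exactly ``reflection principle after integrating over the value of $B_e$'': for \eqref{eq:Bsup} you condition on $B_e$ and apply the reflection bound $\mathbb{P}(\sup_{u\le T}W_u\le c)\ll c/\sqrt{T}$, and for \eqref{eq:ballot} you condition on $B_e$ under the bridge law and use the ballot formula $1-e^{-2yb/(t-e)}$ (itself a reflection identity), with the exact Gaussian computation and the cancellation $-\lambda m+\lambda^2\sigma^2/2=-2ab/t$ correctly handling the regime of large $b$. The paper states this in one line; your write-up simply supplies the details.
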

\begin{proof}
    Both inequalities are a direct consequence of the reflection principle after integrating over the value of $B_e$.
\end{proof}

\section{Proof of the main theorem} \label{sec:main}

Let $X$ be a $\star-$scale invariant field with approximations $(X_t)_{t\geq 0}$, and define its associated (critical) GMC measure $\mu$ as
\[
    \mu(\mathrm{d}\theta):=\lim_{t\to\infty} \mu_t(\mathrm{d}\theta),\quad \text{ where } \mu_t(\mathrm{d}\theta):=\sqrt{t}e^{\sqrt{2}X_t(\theta)-\mathbb{E}X_t(\theta)^2}\mathrm{d}\theta.
\]
This limit holds in probability, and it follows that 
\begin{equation}\label{eq:c_nconvergence}
    c_{n,t}=\int_0^1 e^{in\theta}\mu_t(\mathrm{d}\theta)\overunderset{\mathbb{P}}{t\to\infty}{\longrightarrow} c_n=\int_0^1 e^{in\theta} \mu(\mathrm{d}\theta).
\end{equation}

We want to show that $\{(\log n)^{\alpha}|c_n|\}_{n\geq 1}$ converges to zero in probability for any $\alpha<1/4$, which we do by proving an upper bound of the correct order for the expectation of 
\begin{equation}\label{eq:c_nt}
    |c_{n,t}|^2=\iint_{[0,1]^2}e^{in(\theta_2-\theta_1)}\mu_t(\mathrm{d}\theta_1)\mu_t(\mathrm{d}\theta_2)
\end{equation}on a good event ${E}_{n,t}{(A)}$. This is the content of Proposition \ref{prop:second_moment} below. As a byproduct of its proof, we identify the set of pairs $(\theta_1,\theta_2)\in[0,1]^2$ over which the integral in \eqref{eq:c_nt} yields a leading order contribution, and those for which the contribution is negligible in the limit (see Remark \ref{remark:leading_order}). 

\begin{proposition}\label{prop:second_moment} Let $A>0$ and $0<\delta<1/4$. Let $ r_n:=\delta\log n$.
For any $t\geq r_n$, consider the event
\[
    {E}_{n,t}{(A)}:=\big\{\forall \theta\in [0,1], \,\theta\in G_{n,t}{(A)}\big\},
\]
where 
\begin{align}\label{eq:goodset}
G_{n,t}{(A)}=\left\{ \theta \in [0,1]: \,\,
    X_s(\theta)\leq U(s)+A,\,\forall s\in [r_n,t] \right\}
\end{align}
and $U(t)$ is defined in Equation \eqref{eq:U(t)}.
Then for any $\e>0$, there exists an integer $N>2$ such that for any $n\geq N$,
\[
  \limsup_{t\to\infty}\mathbb{E}\Big[|c_{n,t}|^2;{{E}_{n,t}{(A)}}\Big]\ll_A (\log n)^{-1/2+\e},
\]
where the implicit constant is uniform in $n$.
\end{proposition}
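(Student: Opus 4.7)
The plan is a second-moment argument combined with a Cameron--Martin shift and a careful case analysis on $\Delta=|\theta_2-\theta_1|$. On the good event $\mathcal{E}_{n,t}^{(A)}$ the indicator $\mathbf{1}_{\mathcal{G}_{n,t}^{(A)}}$ is identically $1$, so inserting it into the integral defining $c_{n,t}$ leaves its value unchanged, giving
\[
|c_{n,t}|^{2}\mathbf{1}_{\mathcal{E}_{n,t}^{(A)}}\le\Big|\int_{0}^{1}e^{in\theta}\mathbf{1}_{\mathcal{G}_{n,t}^{(A)}}(\theta)\,\mu_{t}(\mathrm{d}\theta)\Big|^{2},
\]
which is real and non-negative. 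Expanding the square, taking expectation, applying Fubini, and using the Cameron--Martin formula to absorb $e^{\sqrt{2}(X_{t}(\theta_{1})+X_{t}(\theta_{2}))}$ yields
\[
\mathbb{E}\big[|c_{n,t}|^{2};\mathcal{E}_{n,t}^{(A)}\big]\le t\iint_{[0,1]^{2}}e^{in(\theta_{2}-\theta_{1})}e^{2K_{t}(\Delta)}\,\mathbb{P}\big(\theta_{1},\theta_{2}\in\widetilde{\mathcal{G}}_{n,t,\Delta}\big)\,\mathrm{d}\theta_{1}\mathrm{d}\theta_{2},
\]
where under the shift each $X_{s}(\theta_{i})$ acquires the deterministic drift $\sqrt{2}(s+K_{s}(\Delta))$ for $s\le t$, so that the constraints defining $\widetilde{\mathcal{G}}_{n,t,\Delta}$ read $X_{r_{n}}(\theta_{i})\le m(r_{n})-\sqrt{2}(r_{n}+K_{r_{n}}(\Delta))+A$ and $X_{s}^{(r_{n})}(\theta_{i})\le U(s)-\sqrt{2}(s+K_{s+r_{n}}(\Delta)-K_{r_{n}}(\Delta))+A$ for $s\in[0,t-r_{n}]$, $i=1,2$.

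The next step is to split the integral by $\Delta$. For $\Delta\le 1/n$ I bound $|e^{in(\theta_{2}-\theta_{1})}|\le 1$ and estimate the shifted probability using Lemma \ref{lemma:jointlaw}, which decomposes $X(\theta_{1})$ into a $\sigma(X(\theta_{2}))$-measurable piece plus an independent Gaussian field; this makes the two paths essentially identical up to time $r_{\Delta}:=\log\Delta^{-1}$ and decoupled afterwards. Combining the sup estimate \eqref{eq:Bsup} on the pre-$r_{n}$ segment with the ballot/bridge bound \eqref{eq:ballot} on each independent post-$r_{n}$ increment, and using the $K_{s}(\Delta)$ asymptotics \eqref{eq:Kestimate1}--\eqref{eq:Kestimate2}, should yield
\[
e^{2K_{t}(\Delta)}\,\mathbb{P}\big(\theta_{1},\theta_{2}\in\widetilde{\mathcal{G}}_{n,t,\Delta}\big)\ll_{A}\frac{1}{t\,\Delta\,(\log\Delta^{-1})^{3}}
\]
uniformly as $t\to\infty$. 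Multiplying by $t$ and changing variables to $(\theta_{1},\Delta)$, the contribution from $\Delta\in[e^{-t},1/n]$ becomes $\int\mathrm{d}\Delta/(\Delta(\log\Delta^{-1})^{3})=O((\log n)^{-2})$, and the residual range $\Delta<e^{-t}$ contributes negligibly because the shifted barrier there is essentially impossible to satisfy (the effective drift for $s\in[0,r_{\Delta}-r_{n}]$ exceeds the critical rate). For $\Delta\ge 1/n$ I exploit the oscillation of $e^{in(\theta_{2}-\theta_{1})}$ via integration by parts in $\theta_{2}-\theta_{1}$, using smoothness of $\Delta\mapsto K_{t}(\Delta)$ and of the barrier probability in $\Delta$; this gains a factor $1/n$ and renders the contribution $o((\log n)^{-2})$.

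The main difficulty lies in the small-$\Delta$ barrier estimate, which must be controlled across three interleaved time ranges: on $[0,r_{\Delta}\wedge r_{n}]$ the two shifted processes behave as a single Brownian motion facing a strongly negative effective barrier (the Girsanov drift here is $2\sqrt{2}$, twice the critical rate, so the $U(s)-2\sqrt{2}s$ barrier is very restrictive); on $[r_{\Delta}\wedge r_{n},r_{n}]$ they have decorrelated but the shifted barrier is near zero; and on $[r_{n},t]$ they run as two independent Brownians facing the slowly decreasing barrier $U(s)-\sqrt{2}s\approx -\tfrac{\log s}{2\sqrt{2}}+O(\log\log s)$. The exponent $-3$ in $(\log\Delta^{-1})^{-3}$ should arise from combining a Gaussian tail factor at time $r_{n}$ with two independent applications of \eqref{eq:Bsup}/\eqref{eq:ballot} on the post-$r_{n}$ paths, each contributing a $1/\sqrt{t-r_{n}}$ suppression that cancels the prefactor $t$. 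Bookkeeping all the log factors uniformly in $t$ (those from $m(r_{n})$, $U(s)$, the Gaussian densities, and the $O(\|k'\|_{\infty})$ errors in \eqref{eq:Kestimate1}--\eqref{eq:Kestimate2}), and treating separately the sub-case $\Delta\ge e^{-r_{n}}$ where the Girsanov shift saturates at time $r_{n}$, will be the main source of technical work.
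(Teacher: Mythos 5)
Your overall architecture (insert the indicator by positivity, Girsanov shift, split in $\Delta$, trivial bound plus barrier estimates at small gaps, oscillation at large gaps) matches the paper's, but two of your key steps do not hold as stated. First, for $\Delta\leq 1/n$ you treat the post-$r_n$ paths at $\theta_1,\theta_2$ as independent and claim the per-gap bound $e^{2K_t(\Delta)}\mathbb{P}(\cdot)\ll_A \frac{1}{t\,\Delta\,(\log \Delta^{-1})^3}$, attributing the cube of the logarithm to a Gaussian tail at time $r_n$ plus two independent applications of \eqref{eq:Bsup}/\eqref{eq:ballot} after $r_n$. But by property (3) of the field, the increments $X_s^{(r_n)}(\theta_1)$ and $X_s^{(r_n)}(\theta_2)$ only decouple after the branching time $r_\Delta=\log\Delta^{-1}$, which for $\Delta\leq 1/n$ is strictly larger than $r_n=\delta\log n$; on $[r_n,r_\Delta]$ they are nearly perfectly correlated and carry Girsanov drift $2\sqrt{2}s$, not $\sqrt{2}s$. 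Consequently the time-$r_n$ cap $m(r_n)+A$ produces a factor $r_n^{-2}$ (powers of $\log n$, not of $\log\Delta^{-1}$), the correlated stretch $[r_n,r_\Delta]$ produces $e^{-(r_\Delta-r_n)}(r_\Delta-r_n)^{-2+o(1)}$ via a single bridge estimate, and only the stretch after $r_\Delta$ yields the decoupled factor $((t-r_\Delta)+1)^{-1}$; this is exactly the shape of \eqref{eq:macro}, and it is \emph{not} of the form $\frac{1}{t\Delta(\log\Delta^{-1})^{3}}$ — indeed your claimed bound fails uniformly in $t$ (take $r_\Delta$ of order $t$ at fixed $n$: the event you must bound has probability of order $e^{-r_n}r_n^{-2}e^{-(r_\Delta-r_n)}$ up to polynomial corrections, which is far larger than your claim allows). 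The good set $\mathcal{G}_{n,t}^{(A)}$ imposes nothing on $[0,r_n]$ beyond the endpoint cap, so it cannot reproduce the heuristic supercritical-chaos exponent $3$; the correct mechanism is that the $\Delta$-integral of $(r_\Delta-r_n)^{-2}$ contributes only a constant, and the whole $(\log n)^{-2}$ comes from the $r_n^{-2}$ factor.

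Second, in the oscillatory range you cut at $1/n$ and integrate by parts in $\Delta$ the full $t$-dependent quantity, invoking "smoothness of the barrier probability in $\Delta$". On $[1/n, n^{-\delta}]$ the shifted probability depends on $\Delta$ through the entire correlated path up to time $r_\Delta>r_n$ (a barrier event over an interval, with covariance structure varying with $\Delta$), and you would have to differentiate this in $\Delta$ uniformly in $t$; no argument is offered for that, and near $\Delta\approx 1/n$ the oscillation yields no gain anyway ($n\Delta\asymp1$), so the boundary and derivative terms are not visibly $o((\log n)^{-2})$. The paper's choice of split point $\Delta_n=e\,n^{-\delta}$ with $\delta<1/4$ is precisely what removes these difficulties: for $\Delta\geq\Delta_n$ the post-$r_n$ factor is $\Delta$-independent (and uniformly bounded), so the integration by parts only involves $e^{2K_{r_n}(\Delta)}$ times a two-dimensional Gaussian barrier probability at the single time $r_n$, whose $\Delta$-derivative admits the crude bound $\ll n^{\delta}r_n^{3}$, sufficient because $3\delta<1$; the intermediate gaps $[1/n,n^{-\delta}]$ are then absorbed into the trivially bounded range (II), where the correct per-gap estimate shows they contribute at the same $(\log n)^{-2}$ order. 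To repair your proof you should move the split to $n^{-\delta}$, redo the small-gap estimate with the branching time $r_\Delta$ (not $r_n$) as the decoupling time, and replace the claimed $(\log\Delta^{-1})^{-3}$ bound by one of the form \eqref{eq:macro}.
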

\noindent The choice of good event ${E}_{n,t}{(A)}$ will be explained in Section \ref{sec:4}. For the time being, we show how Theorem \ref{thm:main} follows straightforwardly from this proposition.
\begin{proof}[Proof of Theorem \ref{thm:main} assuming Proposition \ref{prop:second_moment}] 

Fix $\e>0$ and $\alpha<1/4$. By \eqref{eq:c_nconvergence} and the Portmanteau theorem,
\begin{align*}
    \mathbb{P}\Big(|c_n| ({\log n})^{\alpha}>\e\Big)&\leq \liminf_{t\to\infty}\mathbb{P}\Big(|c_{n,t}| ({\log n})^{\alpha}>\e\Big)
\end{align*}
for any $n>0$. By definition of ${E}_{n,t}{(A)}$, the pre-limit quantity on the right-hand side is bounded above by
\begin{align}\label{eq:split}
    \mathbb{P}\Big(\mathbf{1}_{{E}_{n,t}{(A)}}&|c_{n,t}| ({\log n})^{\alpha}>\e\Big)+\mathbb{P}\Big(\exists s\in [0,\infty), \exists \theta \in [0,1]: X_s(\theta)>U(s)+A\Big),
\end{align}
for any $A>0$. By Proposition \ref{prop:lacoin}, we can make the second term in \eqref{eq:split} arbitrarily small in a way that is uniform in $n$. We have that
\begin{align}
&\sup_{n>2}\mathbb{P}\Big(\exists s\in [0,\infty), \exists \theta \in [0,1]: X_s(\theta)>U(s)+A\Big) <\e/2, 
\end{align}
for sufficiently large $A$, and it follows that for such an $A$ and any $n>0$,
\begin{align*}
    \mathbb{P}\Big(|c_n| ({\log n})^{\alpha}>\e\Big)&\leq \limsup_{t\to\infty}\mathbb{P}\Big(\mathbf{1}_{{E}_{n,t}{(A)}}|c_{n,t}| ({\log n})^{\alpha}>\e\Big)+\e/2.
\end{align*}
Using Chebyshev's inequality, we can then write
 \begin{align*}
\limsup_{t\to\infty}\mathbb{P}\Big(\mathbf{1}_{{E}_{n,t}{(A)}}|c_{n,t}|({\log n})^{\alpha}>\e\Big)\leq \frac{(\log n)^{2\alpha}}{\e^2}\limsup_{t\to\infty}{\mathbb{E}\big[|c_{n,t}|^2;{E}_{n,t}{(A)}\big]},
 \end{align*}
 and the claim follows by Proposition \ref{prop:second_moment} upon taking $n\to\infty$.
\end{proof}

What's left is to prove Proposition \ref{prop:second_moment}, which is the main task here.
\section{Proof of Proposition \ref{prop:second_moment}}\label{sec:4}
\subsection{Initial reductions} To bound $|c_{n,t}|\mathbf{1}_{{E}_{n,t}{(A)}}$, we first want to extract a local condition (meaning a condition on each pair of points $\theta_1,\theta_2$) in \eqref{eq:c_nt} from the global event ${E}_{n,t}{(A)}$, noting that one cannot directly upper bound the integrand due to the presence of the oscillating factor $e^{in(\theta_2-\theta_1)}$. We instead note the following pointwise inequality on the sample space
\begin{align}\label{eq:indicatorinside}
\mathbf{1}_{{E}_{n,t}{(A)}}|c_{n,t}|^2&=\mathbf{1}_{{E}_{n,t}{(A)}}\iint_{[0,1]^2}e^{in(\theta_2-\theta_1)}\mathbf{1}\Big\{\theta_1,\theta_2\in G_{n,t}{(A)}\Big\}\mu_t(\mathrm{d}\theta_1)\mu_t(\mathrm{d}\theta_2)\\
&\leq \iint_{[0,1]^2}e^{in(\theta_2-\theta_1)}\mathbf{1}\Big\{\theta_1,\theta_2\in G_{n,t}{(A)}\Big\}\mu_t(\mathrm{d}\theta_1)\mu_t(\mathrm{d}\theta_2)\nonumber.
\end{align}
We used the fact that the indicator in the integrand is redundant in the first line and positivity of the double integral on the second, noting that it equals the modulus squared of the $n$--th Fourier coefficient of 
\[
\tilde{\mu}_t(\mathrm{d}\theta):=\mathbf{1}\Big\{\theta\in G_{n,t}{(A)}\Big\}\mu_t(\mathrm{d}\theta).
\]
It therefore suffices to prove the analogous claim (to Proposition \ref{prop:second_moment}) for the Fourier coefficients of $\tilde{\mu}_t$.
\bigskip 

To that end, fix $0<\delta<1/4$ and partition $[0,1]^2$ into the following two ranges:
\begin{align}
    \mathrm{(I)}&=\{(\theta_1,\theta_2):|\theta_2-\theta_1|\in[\Delta_n,1]\}\\
    \mathrm{(II)}&=\{(\theta_1,\theta_2):|\theta_2-\theta_1|\in[0,\Delta_n)\}, 
\end{align}
where
\[
    \Delta_n=:en^{-\delta},\text{ noting that } \log \Delta_n^{-1}=r_n-1.
\]
Denote the contribution to \eqref{eq:c_nt} coming from each of these regions by
\begin{equation}\label{eq:regions}
    \mathcal{C}_{n,t}(\#):=\iint_{(\mathrm{\#})}e^{in(\theta_2-\theta_1)}\mathbf{1}\Big\{\theta_1,\theta_2\in G_{n,t}{(A)}\Big\}\mu_t(\mathrm{d}\theta_1)\mu_t(\mathrm{d}\theta_2)
\end{equation}
for $\#\in \{\mathrm{I},\mathrm{II}\}$. 

The restriction on $\delta$ is technical, and is necessary for our bound for the contribution from (I) (see \eqref{eq:IBPLAST}) to be of the correct order. On (II), we will ignore the $e^{in(\theta_2-\theta_1)}$ by the triangle inequality. While this may seem excessively inefficient---since $\delta<1$, $e^{in(\theta_2-\theta_1)}$ will still oscillate with $n$---it will become apparent that any $\delta>0$ yields the same bound (due to the observation in Remark \ref{remark:leading_order}).

\subsection{The oscillatory range ($|\theta_2-\theta_1|\geq \Delta_n$)}
We first study the contribution from $(\mathrm{I)}$ where $e^{in(\theta_2-\theta_1)}$ oscillates very rapidly. 
 \begin{proposition}\label{prop:oscillating}
         Let $A>0$. Then
    \begin{align}\label{eq:subleading}
    \lim_{t\to\infty} \mathbb{E}\big[\mathcal{C}_{n,t}(\mathrm{I})\big]=o_A\big((\log n)^{-2}\big)
\end{align}
for large enough $n$.
\end{proposition}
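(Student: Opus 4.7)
The plan is to rewrite $\mathbb{E}[\mathcal{C}_{n,t}(\mathrm{I})]$ as an oscillatory integral in the gap $\Delta=\theta_2-\theta_1$ with a smooth, $t$-independent amplitude, and then exploit the oscillation of $e^{in\Delta}$ via one integration by parts in $\Delta$.

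On region~(I) we have $|\Delta|\geq \Delta_n>e^{-r_n}$, so by property~(3) of the $\star$-scale invariant field, the processes $(X^{(r_n)}_s(\theta_1))_{s\geq 0}$ and $(X^{(r_n)}_s(\theta_2))_{s\geq 0}$ are independent of each other and of $(X_{r_n}(\theta_1),X_{r_n}(\theta_2))$. Splitting $X_t(\theta_i)=X_{r_n}(\theta_i)+X^{(r_n)}_{t-r_n}(\theta_i)$ (with $\mathbb{E} X_t(\theta_i)^2=r_n+(t-r_n)$), the expected integrand in \eqref{eq:regions} factors into a ``base'' factor depending only on $(X_{r_n}(\theta_1),X_{r_n}(\theta_2))$ and two independent ``tail'' factors of the form
\[
e^{-(t-r_n)}\mathbb{E}\big[e^{\sqrt{2}X^{(r_n)}_{t-r_n}(\theta_i)}\mathbf{1}\{X^{(r_n)}_s(\theta_i)\leq U(s)+A,\forall s\in[0,t-r_n]\}\big].
\]
After a Girsanov tilt, each tail factor equals the probability that a standard Brownian motion stays below the uniformly bounded barrier $A+\tfrac{4\log\log s}{\sqrt{2}}-\tfrac{\log s}{2\sqrt{2}}$ on $[e,t-r_n]$, which by \eqref{eq:Bsup} is $\ll_A 1/\sqrt{t-r_n}$; indeed, by a standard ballot argument $\sqrt{t-r_n}$ times this quantity converges as $t\to\infty$ to a finite constant. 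In particular, $t$ times the product of the two tail factors converges to some finite $\rho_A^2$ and is uniformly bounded in $t$ and $n$.

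For the base factor, write $X_{r_n}(\theta_i)=\sqrt{K}\,V+\sqrt{r_n-K}\,Y_i$ with $K=K_{r_n}(\Delta)$ and $V,Y_1,Y_2$ iid $\mathcal{N}(0,1)$. A direct double Gaussian tilt reduces the base factor to $e^{2K}\,\mathbb{P}(Z_1\leq\tilde{a},Z_2\leq\tilde{a};\rho)$, for $(Z_1,Z_2)$ bivariate standard normal of correlation $\rho=K/r_n$ and an explicit threshold $\tilde{a}=\tilde{a}(\Delta,n)$ of order $-K/\sqrt{r_n}$. Using translation invariance on $[0,1]$ (so the amplitude depends on $(\theta_1,\theta_2)$ only through $\Delta$) together with dominated convergence, the proposition reduces to showing
\[
\Big|\int_{|\Delta|\in[\Delta_n,1]}e^{in\Delta}(1-|\Delta|)H_n(\Delta)\,d\Delta\Big|=o_A\big((\log n)^{-2}\big),
\]
where $H_n(\Delta):=\rho_A^2\,e^{2K_{r_n}(\Delta)}\mathbb{P}(Z_1\leq\tilde{a}(\Delta),Z_2\leq\tilde{a}(\Delta);\rho(\Delta))$.

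The crucial regularity input for the oscillatory bound is the estimate $|K'_{r_n}(\Delta)|=|\int_1^{e^{r_n}}k'(u\Delta)\,du|\leq\|k'\|_\infty/|\Delta|$ on $[\Delta_n,1]$, which holds because $k'$ is supported in $[-1,1]$. A chain-rule computation together with standard derivative estimates for the bivariate normal cdf in $(\rho,\tilde{a})$ gives $|H_n'(\Delta)|\ll_A (\log n)^{O(1)}H_n(\Delta)/|\Delta|$, so integration by parts bounds the left-hand side by
\[
\frac{|H_n(\Delta_n)|}{n}+\frac{(\log n)^{O(1)}}{n}\int_{\Delta_n}^{1}\frac{H_n(\Delta)}{|\Delta|}\,d\Delta.
\]
The bivariate tail bound $\mathbb{P}(Z_1\leq-x,Z_2\leq-x;\rho)\ll e^{-x^2/(1+\rho)}$ together with the substitution $u=\log|\Delta|^{-1}\in[0,r_n-1]$ yields $H_n(\Delta)/|\Delta|\ll\exp(2ur_n/(r_n+u))$, whose maximum over $[0,r_n]$ is attained at $u=r_n$ with value $e^{r_n}=n^{\delta}$. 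Hence the expression above is $O_A(n^{\delta-1}(\log n)^{O(1)})=o((\log n)^{-2})$ for any $\delta<1$ (in particular for $\delta<1/4$). The main obstacle is establishing the derivative estimate $|H_n'(\Delta)|\ll_A (\log n)^{O(1)}H_n(\Delta)/|\Delta|$: since $H_n$ couples the slowly-varying correlation $K_{r_n}(\Delta)$ to a bivariate Gaussian cdf evaluated at the $n$-dependent threshold $\tilde{a}$, one must carefully track how differentiation through both factors interacts with the Gaussian weights of $V$ to avoid any polynomial-in-$n$ blow-up.
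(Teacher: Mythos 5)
Your plan is structurally the same as the paper's: factor the expectation into an $X_{r_n}$-part and an $X^{(r_n)}$-part, bound the late-time factor by the Brownian barrier estimate (giving $O_A(1)$ uniformly in $t,n$ after the $t$-normalisation), Girsanov-tilt the early-time factor, reduce to a $\Delta$-integral against $e^{in\Delta}$, and integrate by parts once. Two of your ingredients are in fact slightly sharper than what the paper uses: your bound $|K_{r_n}'(\Delta)|\ll\|k'\|_{\infty}/|\Delta|$ is a $\Delta$-dependent refinement of the paper's uniform bound $|K'_{r_n}|\leq e^{r_n}\|k'\|_{\infty}\ll n^{\delta}$ (they coincide at $\Delta=\Delta_n$), and you retain the Gaussian tail saving $\mathbb{P}(Z_1\leq\tilde a,Z_2\leq\tilde a;\rho)\ll e^{-\tilde a^2/(1+\rho)}$ rather than bounding the probability by $1$. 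These would, if carried through, give the cleaner exponent $n^{\delta-1}(\log n)^{O(1)}$ rather than the paper's $n^{3\delta-1}r_n^3$; both suffice since $\delta<1/4$.

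The gap is the one you flag yourself: the relative derivative bound $|H_n'(\Delta)|\ll_A(\log n)^{O(1)}H_n(\Delta)/|\Delta|$ is asserted but not established. This is the load-bearing step of the IBP. It is plausible (logarithmic derivatives of bivariate normal CDFs in $(\tilde a,\rho)$ are polynomial in $|\tilde a|$ and in $(1-\rho)^{-1/2}$, and both quantities are $O(\sqrt{r_n})$ in your range, with some care needed as $\rho\to 1$ near $\Delta=\Delta_n$), but a relative bound requires pairing the derivative against a two-sided control of $H_n$, which is genuinely more delicate than what the paper does. The paper instead uses Leibniz's rule to compute $\partial_\Delta\mathbb{Q}_\Delta(\mathcal{I}_{\leq r_n})$ explicitly and bounds it \emph{absolutely} by $\ll r_n^3|K'(\Delta)|$, pairing with the trivial $\mathbb{Q}_\Delta\leq 1$; this is cruder but fully worked out and avoids any division by the probability. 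Either route works, but as written the proposal has this unproven step. (Minor: your $(1-|\Delta|)$ Jacobian factor does not appear in the paper, which treats $[0,1]$ as $1$-periodic so that the $\theta$-integration contributes exactly $1$; this is immaterial.)
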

\begin{proof}
Let $\Delta=(\theta_2-\theta_1)$. By the change of variables $(\theta, \Delta)=(\theta_1,\theta_2-\theta_1)$ and Fubini's theorem,
\begin{align*}
    \mathbb{E}[\mathcal{C}_{n,t}(\mathrm{I})]&\ll te^{-2t}\iint_{(\mathrm{I})} e^{in(\theta_2-\theta_1)}\mathbb{E}\big[e^{\sqrt{2}(X_t(\theta_1)+X_t(\theta_2))}\mathbf{1}\big\{\theta_1,\theta_2, G_{n,t}{(A)}\big\}\big] \mathrm{d}\theta_1 \mathrm{d}\theta_2\\
    &= te^{-2t}\int_{\Delta_n}^1 e^{in\Delta} \mathbb{E}\big[e^{\sqrt{2}(X_t(0)+X_t(\Delta))}\mathbf{1}\big\{0,\Delta\in G_{n,t}{(A)}\big\}\big]\mathrm{d}\Delta,
\end{align*}
where in the second line, we used the translation invariance of the law of $X_t$ in the following sense
\[
    \big(X_s(\theta), X_s(\theta+\Delta)\big)_{s\leq t}\overset{d}{=} \big(X_s(0), X_s(\Delta)\big)_{s\leq t}.
\]
The next step will be to exploit the cancellation coming from $e^{in\Delta}$ using integration by parts, for which we first need to transform the expectation in the integrand into a tractable function of $\Delta$. We aim to leverage the fact that since $G_{n,t}{(A)}$ only restricts $X_s(0)$ and $X_s(\Delta)$ at times $s\geq r_n$, it should, in principle, only depend on $(X_s(0), X_s(\Delta))_{s\leq r_n}$ through the value of $(X_{r_n}(0), X_{r_n}(\Delta))$. Furthermore, since $\log |\Delta|^{-1}\leq r_n-1$ when $|\Delta|\geq \Delta_n$, the integrand's dependence on $\Delta$ should only figure in the joint density of $(X_{r_n}(0), X_{r_n}(\Delta))$.

We make this precise under a tilted measure $\mathbb{Q}_\Delta$, defined through 
\begin{align}\label{eq:Girsanov}
    \frac{\mathrm{d}\mathbb{Q}_\Delta}{\mathrm{d}\mathbb{P}}=e^{\sqrt{2}(X_{r_n}(0)+X_{r_n}(\Delta))-\mathbb{E}[(X_{r_n}(0)+X_{r_n}(\Delta))^2]}.
\end{align}
Noting that $$\mathbb{E}[(X_{r_n}(0)+X_{r_n}(\Delta))^2]=2r_n+2K_{r_n}(|\Delta|),$$
(for $K_{r_n}$ in \eqref{eq:*scale}) we can write 
\begin{align*}
    &te^{-2t}\int_{\Delta_n}^1 e^{in\Delta} \mathbb{E}\big[e^{\sqrt{2}(X_t(0)+X_t(\Delta))}\mathbf{1}\big\{0,\Delta\in G_{n,t}{(A)}\big\}\big]\mathrm{d}\Delta\\
    &\ll te^{-2t+2r_n} \int_{\Delta_n}^1 e^{in\Delta+2K_t(\Delta)} \mathbb{E}_{\mathbb{{Q}}_\Delta}\big[e^{\sqrt{2}(X_{t-r_n}^{(r_n)}(0)+X_{t-r_n}^{(r_n)}(\Delta))}\mathbf{1}\big\{0,\Delta\in G_{n,t}{(A)}\big\}\big] \mathrm{d}\Delta.
\end{align*}
By properties (2) and (3) in Section \ref{sec:setup}, we know that for $\Delta\geq \Delta_n$, the processes 
\begin{equation}\label{eq:processes}
    X_{s}^{(r_n)}(0) \text{ and } X_{s}^{(r_n)}(\Delta) \text{ for } {0\leq s\leq t-r_n}
\end{equation} 
are independent, identically distributed standard Brownian motions. They are also independent from the $\sigma$--algebra generated by the field $X_s$ up to time $s\leq r_n$, and their distribution is thus unchanged under $\mathbb{Q}_\Delta$. By Girsanov's theorem, the mean of $X_{r_n}(0)$ under $\mathbb{Q}_\Delta$ is shifted by
\begin{equation}\label{eq:girsanovoscillatory}
        \sqrt{2}\cdot\text{Cov}\Big(X_{r_n}(0), X_{r_n}(0)+X_{r_n}(\Delta)\Big)=\sqrt{2}\big(r_n+K(\Delta)\big)
\end{equation}
where $K=K_{r_n}$, and the same is true for $X_{r_n}(\Delta)$ since $K(\Delta)=K(|\Delta|)$. 

Using the decomposition
\begin{align*}
    \mathbf{1}\big\{0,\Delta\in G_{n,t}{(A)}\big\}&=\mathbf{1}\big\{X_{r_n}(0), X_{r_n}(\Delta)\leq U(r_n)+A\big\}\\
    &\times \mathbf{1}\big\{X_{s}^{(r_n)}(0)\leq  U(s+r_n)+A-X_{r_n}(0),\forall s\in (0,t-r_n]\big\}\\&\times\mathbf{1}\big\{X_s^{(r_n)}(\Delta)\leq  U(s+r_n)+A-X_{r_n}(\Delta),\forall s\in (0,t-r_n]\big\}
\end{align*}
and the law of total expectation, we therefore have
\begin{align*}
    &\mathbb{E}_{\mathbb{{Q}}_\Delta}\Big[e^{\sqrt{2}(X_{t-r_n}^{(r_n)}(0)+X_{t-r_n}^{(r_n)}(\Delta))}\mathbf{1}\big\{0,\Delta\in G_{n,t}{(A)}\big\}\Big]\\
    & =\mathbb{E}_{\mathbb{{Q}}_\Delta}\bigg[\mathbf{1}\big\{X_{r_n}(0), X_{r_n}(\Delta)\leq U(r_n)+A\big\}\times\\&\mathbb{E}\Big[e^{\sqrt{2}X_{t-r_n}^{(r_n)}(0)}\mathbf{1}\big\{X_s^{(r_n)}(0)\leq U(s+r_n)+A-X_{r_n}(0), \forall s\in (0,t-r_n]\big\}\Big|\,X_{r_n}(0)\Big]\\& \mathbb{E}\Big[e^{\sqrt{2}X_{t-r_n}^{(r_n)}(\Delta)}\mathbf{1}\big\{X_s^{(r_n)}(\Delta)\leq U(s+r_n)+A-X_{r_n}(\Delta),\forall s\in (0,t-r_n]\big\}\Big|\,X_{r_n}(\Delta)\Big]\bigg].
\end{align*}
Performing the outer integral on positions $(X_{r_n}(0),X_{r_n}(\Delta))$ then shows that the above equals
\[
    f(\Delta):=\iint_{(-\infty,U(r_n)+A-\sqrt{2}(r_n+K(\Delta))]^2}p_\Delta(u_1,u_2) \beta(u_1)\beta(u_2)\mathrm{d}u_1\mathrm{d}u_2,
\]
where 
\[
p_\Delta(u_1,u_2)
=
\frac{1}{2\pi\sqrt{r_n^2-K(\Delta)^2}}
\exp\!\bigg(\!\!
-\frac{1}{2}\frac{r_n(u_1^2+u_2^2)-2K(\Delta)u_1u_2}
{\bigl(r_n^2-K(\Delta)^2\bigr)}
\bigg)
\]
is the density of a pair $(X_1,X_2)$ of centered (real) Gaussian random variables satisfying
\[
    \mathbb{E}[X_1X_2]=K(\Delta),\quad \mathbb{E}[|X_1|^2]= \mathbb{E}[|X_2|^2]=r_n,
\]
and 
\[
    \beta(u):=\mathbb{E}\Big[e^{\sqrt{2}B_{t-r_n}}\mathbf{1}\big\{B_s\leq U(s+r_n)+A-u,\forall s\in (0,t-r_n]\big\}\Big],
\]
for $(B_t)_{t\geq 0}$ a standard Brownian motion. As intended, the dependence on $\Delta$ is now solely contained in $p_\Delta$ and in the bounds of integration.

We now turn to our main task, and show that
\begin{equation}\label{eq:claim}
    \limsup_{t\to\infty} te^{-2t+2r_n}\int_{\Delta_n}^1 e^{in\Delta+2K(\Delta)}f(\Delta)\mathrm{d}\Delta =o_A\big((\log n)^{-2}\big).
\end{equation}
Integration by parts shows that the left-hand side equals
\begin{equation}\label{eq:IBP}
     \limsup_{t\to\infty}\,te^{-2t+2r_n}\bigg(\frac{e^{in\Delta+2K(\Delta)}f(\Delta)}{in}\bigg|^{1}_{\Delta_n}-\int_{\Delta_n}^1\frac{e^{in\Delta}\partial_\Delta\big( e^{2K(\Delta)}f(\Delta)\big)}{in}\mathrm{d}\Delta\bigg).
\end{equation}
We first handle the boundary term by deriving a uniform bound on $f(\Delta)$. Under the change of measure 
\[
    \frac{\mathrm{d}\tilde{\mathbb{P}}}{\mathrm{d}\mathbb{P}}=e^{\sqrt{2}B_{t-r_n}-(t-r_n)},
\]
the mean of $(B_s)_{s\leq t-r_n}$ is shifted by $\sqrt{2}s$ at time $s$ by Girsanov's theorem, and it follows that
\begin{align}\label{eq:betabound}
    \beta(u)&=e^{t-r_n} \tilde{\mathbb{P}}\big(B_s\leq U(s+r_n)+A-u,\forall s\in(0,t-r_n]\big)\nonumber\\
    &\leq e^{t-r_n}\mathbb{P}\big(B_s\leq A+\sqrt{2}r_n-u,\forall s\in (e,t-r_n]\big)\nonumber\\
    &\ll \big(A+r_n+|u|\big)\frac{e^{t-r_n}}{\sqrt{t-r_n}}
\end{align}
for any $u\in \mathbb{R}$, by Equation \eqref{eq:Bsup} and the fact that $U(s)\leq \sqrt{2}s$ for $s>e$. The implicit constant in \eqref{eq:betabound} is uniform over $u, t$ and $n$. Using this pointwise bound and the Cauchy-Schwarz inequality, we deduce that
\begin{align}\label{eq:boundf}
    f(\Delta)&\leq \frac{e^{2(t-r_n)}}{t-r_n}\mathbb{E}\Big[\big(A+r_n+|X_1|\big)\big(A+r_n+|X_2|\big)\Big] \ll_A \frac{r_n^2e^{2(t-r_n)}}{t-r_n}
\end{align}
for any $\Delta\in [\Delta_n,1]$, and in turn that
\begin{equation}\label{eq:boundarytermIBP}
\limsup_{t\to\infty}\,te^{-2t+2r_n} \frac{e^{in\Delta+2K(\Delta)}f(\Delta)}{in}\bigg|^{1}_{\Delta_n} \ll_A \frac{r_n^2 e^{2K(\Delta_n)}}{n} \ll_A \frac{r_n^2e^{2r_n}}{n}.
\end{equation}
Since $r_n=\delta\log n$ and $\delta<1/4$, this is clearly $o_A((\log n)^{-2})$.

To handle the second term inside the limit in \eqref{eq:IBP}, we first note that it is
\begin{align}\label{eq:IBP2}
    &\ll_A te^{-2t+2r_n}\frac{1}{n}\Big(\sup_{\Delta\in [\Delta_n,1]} f(\Delta)|K'(\Delta)|e^{2K(\Delta)}+e^{2K(\Delta)} f'(\Delta)\Big)\nonumber \\
    &\ll_A \frac{e^{2K(\Delta_n)}}{n}\bigg(r_n^2\cdot\sup_{\Delta\in [\Delta_n,1]} |K'(\Delta)|+te^{-2t+2r_n}\cdot\sup_{\Delta\in [\Delta_n,1]} f'(\Delta)\bigg)
\end{align}
for large enough $t$, by the triangle inequality and the uniform bound for $f(\Delta)$ in \eqref{eq:boundf}. Using the fact that $e^{2K(\Delta_n)}\leq n^{2\delta}$ and that $K'$ is uniformly bounded by
\begin{equation}\label{eq:Kbound}
        |K'(\Delta)|\leq\int_1^{e^{r_n}}|{k'(u\Delta)}|\mathrm{d}u\leq e^{r_n}\|k'\|_{\infty}\ll n^{\delta},
\end{equation}
on $[\Delta_n,1]$ (cf.~ Equation \eqref{eq:*scale}), we conclude that \eqref{eq:IBP2} is
\begin{align}\label{eq:IBP_secondbound}
    &\ll r_n^2 n^{3\delta-1}+n^{2\delta-1}(te^{-2t+2r_n})\sup_{\Delta\in[\Delta_n,1]}f'(\Delta)\nonumber\\
    &\ll o\big((\log n)^{-2}\big)+n^{2\delta-1}(te^{-2t+2r_n})\sup_{\Delta\in[\Delta_n,1]}f'(\Delta).
\end{align}
It remains to bound $f'(\Delta)$ uniformly over $[\Delta_n,1]$. For simplicity, let
\[
    H(\Delta)=U(r_n)-\sqrt{2}(r_n+K(\Delta))+A
\]
and recall that
\[
    f(\Delta)=\iint_{(-\infty,H(\Delta)]^2} p_\Delta(u_1,u_2) \beta(u_1)\beta(u_2)\mathrm{d}u_1\mathrm{d}u_2.
\]
Differentiating $f$ by repeated applications of Leibniz’s rule yields
\begin{align}\label{eq:derivative}
    f'(\Delta)&=\iint_{(-\infty,H(\Delta)]^2}\partial_\Delta p_\Delta(u_1,u_2)\beta(u_1)\beta(u_2)\mathrm{d}u_1\mathrm{d}u_2\nonumber\\\nonumber
    &+H'(\Delta)\beta\big(H(\Delta)\big)\bigg(\int_{-\infty}^{H(\Delta)}p_\Delta\big(u_1,H(\Delta)\big)\beta(u_1)\mathrm{d}u_1+\\&\hspace{120px}\int_{-\infty}^{H(\Delta)}p_\Delta\big(H(\Delta),u_2\big)\beta(u_2)\mathrm{d}u_2\bigg).
\end{align}
The boundary terms are equal by symmetry. Using \eqref{eq:betabound}, we have 
\begin{align*}
    &\int_{-\infty}^{H(\Delta)} p_\Delta\big(u_1,H(\Delta)\big)\beta(u_1)\mathrm{d}u_1 
    \\ &\ll \frac{e^{t-r_n}}{\sqrt{t-r_n}} 
   \frac{e^{-H(\Delta)^2/2r_n}}{\sqrt{2\pi r_n}}\mathbb{E}\big[(A+|X_1|+r_n)\big|X_2=H(\Delta)\big],
\end{align*}
which, when combined with the following estimates,
\begin{align}
        &|H(\Delta)|\ll_Ar_n,\\
        &\mathbb{E}\big[\big(A+|X_1|+r_n\big)\big|X_2=H(\Delta)\big] \leq \big(|H(\Delta)|+r_n\big)\ll_A r_n,\\
        &\beta\big(H(\Delta)\big)\ll_A \frac{e^{t-r_n}}{\sqrt{t-r_n}}|H(\Delta)| \ll_A \frac{r_ne^{t-r_n}}{\sqrt{t-r_n}},\\
       & H'(\Delta)=\sqrt{2}K'(\Delta)\ll n^{\delta}, \quad \text{(cf. Equation \eqref{eq:Kbound}}),
\end{align}
allows us to conclude that $f'(\Delta)$ is
\begin{align*}
    &\ll_A \frac{e^{2(t-r_n)}}{t-r_n}\bigg(r_n^2 n^{\delta}+\iint_{(-\infty,H(\Delta)]^2}\partial_\Delta p_\Delta(u_1,u_2)\beta(u_1)\beta(u_2)\mathrm{d}u_1\mathrm{d}u_2\bigg)\\
    &\ll_A \frac{e^{2(t-r_n)}}{t-r_n}\bigg(r_n^2 n^{\delta}+\iint_{\mathbb{R}^2}|\partial_\Delta p_\Delta(u_1,u_2)|(A+r_n+|u_1|)(A+r_n+|u_2|)\mathrm{d}u_1\mathrm{d}u_2\bigg).
\end{align*}
Differentiating the bivariate Gaussian density shows that the integral above is bounded by
    \begin{align*}\label{eq:p_diff}
        &\ll \frac{|K'(\Delta)\|K(\Delta)|}{(r_n^2-K(\Delta)^2)}\\&\times\mathbb{E}\bigg[\Big(A+r_n+|X_1|\Big)\Big(A+r_n+|X_2|\Big)\Big(1+\frac{1}{(r_n^2-K(\Delta)^2)}+|X_1\|X_2|\Big)\bigg],
    \end{align*}
which is
\[
    \ll |K'(\Delta)\|K(\Delta)|\mathbb{E}\Big[\big(A+r_n+|X_1|\big)\big(A+r_n+|X_2|\big)\big(1+|X_1X_2|\big)\Big]
\]
when $r_n\geq 1$, since
\[
    r_n^2-K(\Delta)^2\geq r_n^2-(r_n-1)^2\geq 1,\quad \text{ when $|\Delta|>en^{-\delta}=e^{-r_n+1}$}.
\]
Noting that $|K'(\Delta)K(\Delta)|\leq r_n n^{\delta}$ by Equation \eqref{eq:Kbound} and that
\[
    \mathbb{E}\Big[\big(A+r_n+|X_1|\big)\big(A+r_n+|X_2|\big)\big(1+|X_1X_2|\big)\Big] \ll_A r_n^3
\]
by the Cauchy-Schwarz inequality, it follows that
\[
    f'(\Delta) \ll_A \frac{e^{2(t-r_n)}}{t-r_n}\big(r_n^2n^\delta +r_n^4 n^\delta\big),
\]
and finally that
\begin{equation}\label{eq:IBPLAST}
    \limsup_{t\to\infty}n^{2\delta-1} (te^{-2t+2r_n})\sup_{\Delta\in [\Delta_n,1]}f'(\Delta) \ll_A n^{3\delta-1} r_n^4 =o_A\big((\log n)^{-2}\big).    
\end{equation}
The bound in \eqref{eq:claim} then follows from Equations \eqref{eq:boundarytermIBP}, \eqref{eq:IBP_secondbound}, and \eqref{eq:IBPLAST}.

\end{proof}

\subsection{Dominant contribution ($|\theta_2-\theta_1|\leq \Delta_{n}$)} Having shown that $\mathbb{E}\mathcal{C}_{n,t}(\mathrm{I})$ can be discarded, it remains to bound the contribution coming from $\mathbb{E}\mathcal{C}_{n,t}(\mathrm{II})$. We prove the following bound.

\begin{proposition}\label{prop:leading_order} Let $\mathcal{C}_{n,t}(\mathrm{II})$ be as in \eqref{eq:regions}. Then for any $\e>0$, 
\[
  \limsup_{t\to\infty}\mathbb{E}\big[\mathcal{C}_{n,t}\mathrm{(II)}\big]=O_A\big((\log n)^{-1/2+\e}\big)
\]
for large enough $n>0$.
\end{proposition}

We begin by introducing the auxiliary function
\begin{align}\label{eq:def_Ft}
    \mathcal{F}_{n,t}(\Delta)=te^{-2t}\cdot \mathbb{E}\Big[\mathbf{1}\big\{0,\Delta\in G_{n,t}{(A)}\big\}e^{\sqrt{2}(X_t(0)+X_t(\Delta))}\Big]
\end{align}
noting that Fubini's theorem and the triangle inequality yield
\begin{align}\label{eq:Fourier_Ft}
\mathbb{E}\big[\mathcal{C}_{n,t}(\mathrm{II})\big]\ll \int_{0}^{\Delta_n}\mathcal{F}_{n,t}(\Delta)\mathrm{d}\Delta.
\end{align}

It therefore suffices to prove that the right-hand side is $\ll (\log n)^{-1/2+\e}$ as $t\to\infty$.
The bulk of the work lies in proving the following uniform upper bound for $\mathcal{F}_{n,t}$ whose proof is deferred to Section \ref{sec:proof_of_main}. 

To state it, we introduce the \textit{branching time} 
$${r(\Delta)}=t\land \big(\log |\Delta|^{-1}\big).$$

\begin{proposition}\label{prop:main}
    Let $\mathcal{F}_{n,t}$ be defined as in Equation \eqref{eq:def_Ft}. Then there exists a constant $C(A)=C>0$ such that for large enough $t,n>0$,  $\mathcal{F}_{n,t}(\Delta)$ satisfies the bound
    \begin{equation}\label{eq:macro}
        \mathcal{F}_{n,t}(\Delta)\leq C\sqrt{r_n}\cdot\frac{e^{r(\Delta)}}{r(\Delta)} \cdot\frac{t}{(t-{r(\Delta)})+1}\cdot\frac{\log\big(r(\Delta)\big)^4}{(r(\Delta)-r_n)\lor 1}
    \end{equation}
    uniformly in $\Delta\in [0,\Delta_n]$.
\end{proposition}

\begin{proof}[Proof of Proposition \ref{prop:leading_order} assuming Proposition \ref{prop:main}]

For large enough $n>0$, we can use Proposition \ref{prop:main} to bound the integral in \eqref{eq:Fourier_Ft}, beginning with the range $[0,e^{-t}]$ where
\[
    \limsup_{t\to\infty}\int_0^{e^{-t}}\mathcal{F}_{n,t}(\Delta)\mathrm{d}t\ll_{A} \sqrt{r_n}\lim_{t\to \infty} t e^{t-t} \frac{1}{t} \bigg(\frac{\log(t)^4}{t-r_n}\bigg)=0.
\]
On $[e^{-t},\Delta_n]$, Proposition \ref{prop:main} yields
\begin{align*}
    \int_{e^{-t}}^{\Delta_n}\mathcal{F}_{n,t}(\Delta)\mathrm{d}\Delta\ll_A \sqrt{r_n}\int_{e^{-t}}^{\Delta_n} \frac{1}{r(\Delta)}\frac{t}{(t-{r(\Delta)})+1}\frac{\log(r(\Delta))^4}{(r(\Delta)-r_n)\lor 1}\frac{1}{\Delta}\mathrm{d}\Delta.
\end{align*}
We first take care of the range where $r(\Delta)$ is close to $r_n$. On $[\Delta_n/e^3,\Delta_n]$, say, we have
\begin{align*}
    &\limsup_{t\to\infty} \int_{\Delta_n/e^3}^{\Delta_n}\mathcal{F}_{n,t}(\Delta)\mathrm{d}\Delta\\&\ll\limsup_{t\to\infty} \frac{\sqrt{r_n}\log(r_n+2)^4}{r_n-1}\frac{t}{t-r_n+2} \int_{\Delta_n/e^3}^{\Delta_n} \frac{\mathrm{d}\Delta}{\Delta} \\&\ll\frac{(\log r_n)^4}{\sqrt{r_n}} \int_{r_n-1}^{r_n+2} \mathrm{d}\log(\Delta^{-1})\ll\frac{(\log r_n)^4}{\sqrt{r_n}}.
\end{align*}
Using the substitution $u=r(\Delta)=\log \Delta^{-1}$, we split the remaining range into $u\in [r_n+2,t/2)$ and $u\in [t/2,t)$. The contribution from the latter is
\begin{align*}
    &\ll \sqrt{r_n}\int_{t/2}^{t} \frac{1}{u}\frac{t}{(t-u)+1}\frac{\log(u)^4}{(u-r_n)\lor 1}\mathrm{d}u \\
    &\ll  \sqrt{r_n}\frac{t}{t-1}\frac{\log(t)^4}{(t-1-r_n)} +\sqrt{r_n}t\log(t)^4\int_{t/2}^{t-1}\frac{1}{u(u-r_n)(t-u)}\mathrm{d}u \\
    &\ll \sqrt{r_n}\frac{\log(t)^{5}}{t} \to 0 \text{ as }t\to \infty.
\end{align*}
On the former, we have $t/(t-u)\leq 2$ and we conclude that 
\begin{align*}
   \limsup_{t\to\infty}\int_{e^{-t}}^{\Delta_n}\mathcal{F}_{n,t}(\Delta)\mathrm{d}\Delta &\ll_A \sqrt{r_n} \int_{r_n+2}^{\infty} \frac{1}{u}\frac{\log(u)^4}{(u-r_n)}\mathrm{d}u\\&\ll_A \frac{\log (2r_n)^4}{\sqrt{r_n}}\int_2^{r_n} \frac{\mathrm{d}x}{x}+\sqrt{r_n}\int_{r_n}^\infty \frac{\log (2x)^4}{x^2}\mathrm{d}x\\
   &\ll_A \frac{(\log \log n)^{5}}{\sqrt{\log n}},
\end{align*}
which is $\ll_A(\log n)^{-1/2+\e}$ for any $\e>0$ as claimed.
\end{proof}
\begin{remark}\label{remark:leading_order}
    For any $a>b>\delta$, the same argument shows that 
    \[
        \lim_{t\to\infty} \int_{n^{-a}}^{n^{-b}} e^{in\Delta}\mathcal{F}_{n,t}(\Delta)\mathrm{d}\Delta\ll_A \frac{1}{(\log n)^{1/2-\e}}
    \]
    whereas for any $s_1(n)\leq s_2(n)$ both tending to $+\infty$ in $n$, 
    \[
        \lim_{t\to\infty} \int_{n^{-s_2(n)}}^{n^{-s_1(n)}} e^{in\Delta}\mathcal{F}_{n,t}(\Delta)\mathrm{d}\Delta =o_A\bigg(\frac{1}{(\log n)^{1/2-\e}}\bigg).
    \]
    This suggests that the main contribution to $\mathbb{E}[\mathcal{C}_{n,t}\mathrm{(II)}]$ (and in turn $\mathbb{E}[|c_{n,t}|^2;{E}_{n,t}{(A)}]$) comes from pairs $(\theta_1,\theta_2)$ for which $|\theta_2-\theta_1|\approx n^{-a}$ for \textit{all} scales $a>\delta$. 
\end{remark}




\section{Upper bound for $\mathcal{F}_{n,t}(\Delta)$} \label{sec:proof_of_main} 
This section proves Proposition \ref{prop:main}. Recalling that
\begin{equation*}
\mathcal{F}_{n,t}(\Delta)=te^{-2t}\mathbb{E}\big[e^{\sqrt{2}(X_{t}(0)+X_{t}(\Delta))}\mathbf{1}\big\{X_{s}(*)\leq U(s)+A,\forall s\in [r_n,t], *\in \{0,\Delta\}\big\}\Big]
\end{equation*}
an application of Girsanov's theorem shows that $\mathcal{F}_{n,t}(\Delta)$ equals
\begin{align}\label{eq:F_tRHS}
        te^{2K_{t}(\Delta)}\mathbb{P}\Big(X_{s}(*)\leq A+{U}(s)-\sqrt{2}(s+K_{s}(\Delta)),{\forall s\in [r_n,t], *\in \{0,\Delta\}}\Big).
\end{align}
The goal will be to prove that this quantity satisfies the bound
\[
    \ll_{A} \sqrt{r_n} \frac{e^{r(\Delta)}}{r(\Delta)}\frac{t}{(t-r(\Delta))+1}\cdot\frac{\log (r(\Delta)-r_n)^{4}}{(r(\Delta)-r_n)\lor 1},
\]
by arguing similarly to the proof of Proposition 2.6 in \cite{Lacoin}. We require a slightly sharper estimate than the one proved there, which we obtain by utilizing the lower order terms in $U(s)$. 



Assume for now that $r(\Delta)-r_n>1$. Observe first that by symmetry, we can add the restriction $X_{r(\Delta)}(0)\leq X_{r(\Delta)}(\Delta)$ to the event in \eqref{eq:F_tRHS} since doing so can only change its probability by a factor of $1/2$. Secondly,  we can use the estimate
\[
    K_{s}(\Delta)=s\land (\log |\Delta|^{-1})+O(\|k'\|_\infty)=s\land r(\Delta)+O(1),
\]
(cf. Equation \eqref{eq:Kestimate1}), which is uniform in $s \geq 0$ and $\Delta\in [0,\Delta_n]$. It follows that there exists a constant $C=C(A)>0$ for which the intersection of $\{X_{r(\Delta)}(0)\leq X_{r(\Delta)}(\Delta)\}$ with the event in \eqref{eq:F_tRHS} is contained in $E_1\cap E_2\cap E_3$, where
\begin{align*}
E_1 &:=\Big\{X_{s}(0)\leq C+U(s)-2\sqrt{2}s,\forall s\in [r_n,r(\Delta)]\Big\}&\\
E_2 &:=\Big\{X_{s}^{(r(\Delta))}(0)\leq C-\sqrt{2}r(\Delta)-X_{r(\Delta)}(0),\forall s\in [0,t-r(\Delta)]\Big\}\\
E_3 &:=\Big\{X_{s}^{(r(\Delta))}(\Delta)\leq C-\sqrt{2}r(\Delta)-X_{r(\Delta)}(0),\forall s\in [0,t-r(\Delta)]\Big\}
\end{align*}
Note that to get the inequality in $E_3$, we used (and subsequently dropped) the restriction $X_{r(\Delta)}(0)\leq X_{r(\Delta)}(\Delta)$.


We thus need to bound
\begin{align}\label{eq:finalprob}
    \mathbb{P}(E_1\cap E_2\cap E_3)=\mathbb{E}\Big[\mathbf{1}_{E_1}\mathbb{P}\big(E_2\cap E_3\big|\Sigma_\Delta\big)\Big],
\end{align}
where $\Sigma_{\Delta}=\sigma((X_s(\theta))_{\theta\in[0,1],s\leq r(\Delta)})$. This will require a ``linearized" version of ${U}(s)$. For $r(\Delta)\geq e$, define the \textit{slope} 
\[
    \alpha_\Delta=\alpha = \bigg(\sqrt{2}-\frac{\log r(\Delta) }{2\sqrt{2}r(\Delta)}\bigg),
\]
so that 
\begin{equation}\label{eq:linearbarrier}
      {U}(s)\leq \alpha s+\frac{4}{\sqrt{2}}{\log\log r(\Delta)}\, \text{ for all $s\in [e,r(\Delta)]$},
\end{equation}
with equality at $s=r(\Delta)$. For simplicity, we also define
\[
    V_{\Delta,A}(s)=V(s)=C+\frac{4}{\sqrt{2}}\log\log r(\Delta)+(\alpha-2\sqrt{2})s
\]
which is greater than $U(s)-2\sqrt{2}s$ for all $s\in [e,r(\Delta)]$.

We can now estimate \eqref{eq:finalprob}. Letting $(B_s)_{s\leq r(\Delta)}$ denote a standard Brownian motion, we note on the one hand that
\begin{align*}
    &\mathbb{P}\Big(E_1\,\Big|\, X_{r_n}(0)=y, X_{r(\Delta)}(0)=z\Big)\\
    &\leq \mathbb{P}\Big(X_{s-r_n}^{(r_n)}(0)\leq V(s),\forall s\in [0,r(\Delta)-r_n]\,\Big|\,X_{r_n}(0)=y,X_{r(\Delta)}(0)=z\Big)\\
    &= \mathbb{P}\big(B_s\leq V(s), \forall s\in [0,r(\Delta)-r_n]\,\big|\,B_{r_n}=y,B_{r(\Delta)}=z\big),
\end{align*}
and that the right-hand side can be estimated using Equation \eqref{eq:ballot}. Indeed, it is equal to the probability that a Brownian bridge from $(V(r_n)-y)$ to $(V(r(\Delta))-z)$ remains positive, which is  
\[
    \ll\bigg(\frac{(V(r_n)-y+1)(V(r(\Delta))-z+1)}{(r(\Delta)-r_n)+1}\land 1\bigg).
\]
On the other hand, Equation \eqref{eq:Bsup} yields 
\begin{align*}
    &\mathbb{P}\big(E_2\cap E_3\,\big|\, \Sigma_\Delta\big)\ll_A \bigg(\frac{(V(r(\Delta))-X_{r(\Delta)}(0)+1)^2}{((t-r(\Delta))+1)}\land 1\bigg).
\end{align*}



By integrating over the values of $$(y,x)=(X_{r_n}(0),X_{r(\Delta)}(0)-X_{r_n}(0))$$ we can therefore conclude that $\mathcal{F}_{n,t}(\Delta)$ is
\begin{align}\label{eq:final_estimate}
        &\ll te^{2K_{t}(\Delta)}\mathbb{E}\Big[\mathbf{1}_{E_1}\mathbb{P}\big(E_2\cap E_3\,\big|\,X_{r(\Delta)}(0)\big)\Big]\nonumber\\
        &\ll_A \frac{te^{2r(\Delta)}}{(r(\Delta)-r_n)(t-r(\Delta)+1)}\\&\times\int_{-\infty}^{V(r_n)}\frac{e^{-\frac{y^2}{2r_n}}}{\sqrt{r_n}}\int_{-\infty}^{V(r(\Delta))-y} \frac{e^{-\frac{x^2}{2(r(\Delta)-r_n)}}
    }{\sqrt{r(\Delta)-r_n}}F(y,(x+y))\mathrm{d}{x}\mathrm{d}{y}\nonumber,
\end{align}
where
\[
    F(a,b):={(V(r_n)-a+1)}{(V(r(\Delta))-b+1)^3}.
\]
The change of variables $z=x+y$ yields
\begin{align*}
    &\frac{te^{2r(\Delta)}}{(r(\Delta)-r_n)(t-r(\Delta)+1)}\\&\times\int_{-\infty}^{V(r_n)}\int_{-\infty}^{V(r(\Delta))} \frac{e^{-z^2/2r(\Delta)}}{\sqrt{r(\Delta)}}\frac{e^{-(y-\frac{r_n}{r(\Delta)}z)^2/2(\tfrac{r_n}{r(\Delta)}(r(\Delta)-r_n))}}{\sqrt{(r_n/r(\Delta))(r(\Delta)-r_n)}} F(y,z)\mathrm{d}z\mathrm{d}y.
\end{align*}
We first evaluate the integral over $y$, which equals
\[
    \mathbb{E}\Big[\mathbf{1}\big(X<V(r_n)\big)\big(V(r_n)-X+1\big)\Big], \text{ where } X\sim \mathcal{N}\Big(\mu=\frac{r_n}{r(\Delta)}z,\sigma^2=\frac{r_n(r(\Delta)-r_n)}{r(\Delta)}\Big).
\]
This can be decomposed into
\begin{align}\label{eq:DOUBLE}
    &\big(V(r_n)-\mu+1\big)\mathbb{P}(X<V(r_n))+\mathbb{E}\big[\mathbf{1}(X<V(r_n))\big(X-\mu\big)\big]\\
    &\leq \big(V(r_n)-\mu+1\big)+\sigma\mathbb{E}\Big[Z\mathbf{1}\big(Z<(V(r_n)-\mu)/\sigma\big)\Big]
\end{align}
(for $Z$ a standard Gaussian), which is equal to
\[
   (V(r_n)-\mu+1)+\frac{\sigma}{\sqrt{2\pi}} e^{-(V(r_n)-\mu)^2/(2\sigma^2)}.
\]
Inserting this into \eqref{eq:DOUBLE} yields the bound
\begin{align*}
     \ll&\frac{te^{2r(\Delta)}}{(r(\Delta)-r_n)(t-r(\Delta)+1)}\\&\times \bigg(\int_{-\infty}^{V(r(\Delta))} \frac{e^{-z^2/2r(\Delta)}}{\sqrt{r(\Delta)}}\Big(V(r_n)-\frac{r_n}{r(\Delta)}z+1\Big)\big(V(r(\Delta))-z+1\big)^3 \mathrm{d}z\\
     &\hspace{20px}+\sigma\int_{-\infty}^{V(r(\Delta))} \frac{e^{-z^2/2r(\Delta)-(V(r_n)-\frac{r_n}{r(\Delta)}z)^2/2\sigma^2}}{\sqrt{r(\Delta)}}\big(V(r(\Delta))-z+1\big)^3\mathrm{d}z\bigg)
\end{align*}
which by the substitution $\bar{z}:=V(r(\Delta))-z$ equals
\begin{align}\label{eq:lastbound}
    &\frac{te^{2r(\Delta)}}{(r(\Delta)-r_n)(t-r(\Delta)+1)}\bigg(\int_{0}^{\infty} \frac{e^{-(V(r(\Delta))-\bar{z})^2/2r(\Delta)}}{\sqrt{r(\Delta)}}\Big(\frac{r_n}{r(\Delta)}
\bar{z}+1\Big)\big(\bar{z}+1\big)^3 \mathrm{d}\bar{z}\nonumber\\
     &\hspace{50px}+\sigma\int_{0}^{\infty} \frac{e^{-(V(r(\Delta))-\bar{z})^2/2r(\Delta)-\bar{z}^2/2(r_
     \Delta-r_n)}}{\sqrt{r(\Delta)}}\big(\bar{z}+1\big)^3\mathrm{d}\bar{z}\bigg)
     \nonumber\\ 
     &\leq \frac{(1+\sigma)te^{2r(\Delta)}}{(r(\Delta)-r_n)(t-r(\Delta)+1)}\int_0^\infty \frac{e^{-(V(r(\Delta))-\bar{z})^2/2r(\Delta)}}{\sqrt{r(\Delta)}}(\bar{z}+1)^4\mathrm{d}\bar{z}
     \nonumber\\
     &\leq  \frac{(1+\sigma) te^{2r(\Delta)}}{(r(\Delta)-r_n)(t-r(\Delta)+1)} \frac{e^{-V(r(\Delta))^2/2r(\Delta)}}{\sqrt{r(\Delta)}} \int_0^{\infty} e^{\tfrac{V(r(\Delta))}{r(\Delta)}\bar{z}}(\bar{z}+1)^4\mathrm{d}\bar{z}.
\end{align}
Recalling that 
\[
    V(r(\Delta))=C+\frac{4}{\sqrt{2}}\log\log r(\Delta)-\sqrt{2}r(\Delta)-\frac{1}{2\sqrt{2}}\log r(\Delta),
\]
the remaining integral is $\ll\int_0^\infty e^{-\bar{z}}(\bar{z}+1)^4\mathrm{d}z<\infty$ for large enough $n$. Expanding the Gaussian term in \eqref{eq:lastbound} and noting that  $\sigma\leq \sqrt{r_n}$, we conclude that
\[
    \mathcal{F}_{n,t}(\Delta)\ll_A \frac{te^{r(\Delta)}}{(r(\Delta)-r_n)(t-r(\Delta)+1)} \frac{\sqrt{r_n}}{r(\Delta)}(\log r(\Delta))^4,
\]
which is the claimed bound.

When $r(\Delta)-r_n<1$, the claim follows by repeating the argument above while replacing $E_1$ by
\[
    \Big\{X_{r_n}(0)\leq C+U(r_n)-2\sqrt{2} r_n\Big\},
\]
and $r(\Delta)$ by $r_n$ in $E_2$ and $E_3$.

\appendix

\section*{Funding}
L.-P. A.'s research is supported in part by the NSF grant DMS-2153803 and the EPSRC grant EP/Z535990/1. J.H. supported by the EPSRC Centre for Doctoral Training in Mathematics of Random Systems: Analysis, Modelling and Simulation (EP/S023925/1). For the purpose of open access, the authors have applied a CC BY public copyright licence to any author accepted manuscript arising from this submission.

\bibliographystyle{abbrv}
\bibliography{biblio}

\appendix

\end{document}